\newcommand{\n}{\noindent}
\newcommand{\vp}{\varepsilon}
\newcommand{\bb}[1]{\mathbb{#1}}
\newcommand{\cl}[1]{\mathcal{#1}}
\newcommand{\ovl}{\overline}
\theoremstyle{plain}
\newtheorem{thm}{Theorem}[section]
\newtheorem{lem}[thm]{Lemma}
\theoremstyle{definition}
\theoremstyle{remark}
\newtheorem{rem}[thm]{Remark}
\numberwithin{equation}{section}
\def\R{\bb R}
\def\C{\bb C}
\def\P{\bb P}
\def\T{\bb T}
\def\d{\delta}
\def\N{\bb N}
\def\P{\bb P}
\def\T{\bb T}
\def\d{\delta}
\begin{document}

\title{On the metric entropy of the Banach-Mazur compactum}

\author{by\\
Gilles Pisier\footnote{Partially supported   by ANR-2011-BS01-008-01.}\\
Texas A\&M University\\
College Station, TX 77843, U. S. A.\\
and\\
Universit\'e Paris VI\\
Inst. Math. Jussieu, \'Equipe d'Analyse Fonctionnelle, Case 186, \\
75252
Paris Cedex 05, France}

 \maketitle

\begin{abstract} We    study  the metric entropy of the
metric space $\cl B_n$ of all $n$-dimensional Banach spaces (the so-called 
Banach-Mazur compactum) equipped with the Banach-Mazur (multiplicative) ``distance" $d$.
We are interested either   in estimates independent of the dimension
or in asymptotic estimates when the dimension tends to $\infty$.
For instance, we prove that,
if $N({\cl B_n},d, 1+\vp)$ is the smallest number of ``balls" of ``radius" $1+\vp$
that cover $\cl B_n$, 
 then
   for any $\vp>0$ we have
 $$0<\liminf_{n\to \infty}n^{-1} \log\log N(\cl B_n,d,1+\vp)\le 
\limsup_{n\to \infty}n^{-1}  \log\log  N(\cl B_n,d,1+\vp)<\infty.$$
We also prove an analogous result
for the metric entropy of the set of $n$-dimensional operator spaces
equipped with the distance $d_N$ naturally associated to $N\times N$-matrices with operator entries.
In that case $N$ is arbitrary but our estimates are valid independently of $N$.
In the Banach space case (i.e. $N=1$) the above upper bound is part of the folklore, and the lower bound is at least partially known
(but apparently has not appeared in print). 
While we follow the same approach in both cases,
the matricial case requires more delicate ingredients, namely
estimates (from our previous work) on certain $n$-tuples of $N\times N$ unitary matrices known as ``quantum expanders".\end{abstract}

Let $\cl B_n$ denote the set of all   $n$-dimensional Banach spaces.
Actually we wish to  identify two
  spaces   if they are isometrically isomorphic. Thus, although it would be more proper
  (but heavier) to describe $\cl B_n$ as the set of equivalence classes
  for this equivalence relation, we adopt the common abuse  to identify a space $E$ 
  with its equivalence class.
We equip this space with the (multiplicative) metric  defined for any pair $E,F\in \cl B_n$  by
$$d (E,F)=\inf \{ \|u\|  \|u^{-1}\|  \}$$
where the inf runs over all the  isomorphisms $u:\ E \to F$.
For any   $G\in \cl B_n$ we have
$d (E,F)\le d (E,G)d (G,F)$, so that $(E,F)\mapsto \log d(E,F)$ is indeed a distance.
It is easy to see (by the compactness of the unit ball of $B(E,F)$) that
$E,F$ are isometric iff $d(E,F)=1$, and in that case we declare that $E=F$. Therefore $\cl B_n$  equipped with $\log d$
is a bona fide metric space. It is a classical fact (presumably due to Banach and Mazur) that this metric space is  compact, and hence is often called the Banach-Mazur compactum.
Since it is more convenient to work with $d$ than with $\log d$, we
will abusively define an open ball of radius $r$ in $(\cl B_n,d)$
to be any set of the form $\{E\mid d(E,E_0)<r\}$, and we call it the ball of radius $r$ centered at $E_0$.
Note that this is non void only if $r>1$.

In this note we wish to tackle the rather natural problem of estimating the metric entropy
$H_n(\vp)= \log_2 N(\cl B_n,d,1+\vp)$
of this compactum, or equivalently its covering number
$N(\cl B_n,d,1+\vp)$. By $N(\cl B_n,d,1+\vp)$ we mean
  the smallest number of open balls
of radius $1+\vp$
 that cover $\cl B_n$. 
By compactness we of course know that
$N(\cl B_n,d,1+\vp)<\infty$ for any $\vp>0$.\\
The metric entropy $H_n(\vp)$ was already studied in \cite{Br} (see also the more recent survey \cite{Br2}):
Indeed, it is proved in \cite{Br} that $H_n(\vp) \approx \vp^ {{\frac{1-n}{2}} }$. Note however,
that this equivalence is for each fixed dimension $n$ and for sufficiently small $\vp>0$
(the range of which may depend on $n$). In sharp contrast we are interested
in estimates independent of the dimension, 
or with asymptotic estimates when $n\to \infty$ with a view to infinite dimensional applications.
Our interest lies in estimates
of  $N(\cl B_n,d,1+\vp)$ (or possibly of $N(\cl B_n,d,1+\vp_n)$) when
$n\to \infty$ and $\vp>0$ is either fixed or depends on $n$ in a prescribed way. 

Our proofs \S \ref{s1} are all rather simple and direct, and the results are partially known, but mostly unpublished
(see however Remarks \ref{sz} and  \ref{lrt} below). One of our goals
is to stimulate further research on this theme.

 We prove in Theorem \ref{t1}
 that for any $\vp>0$ we have
 \begin{equation}\label{eq01}0<\liminf_{n\to \infty} n^{-1} \log\log N(\cl B_n,d,1+\vp)\le 
\limsup_{n\to \infty}n^{-1}  \log\log  N(\cl B_n,d,1+\vp)<\infty.\end{equation}

Moreover, it turns out that for any $R>1+\vp$  and any choice of $E\in \cl B_n$, 
if we replace the whole space $\cl B_n$ by the single ball  $B_{E,R}\subset \cl B_n$,
with  center  $E$ and radius $R$, 
then we still have
  $$0<\liminf_{n\to \infty} n^{-1} \log\log N(B_{E,R},d,1+\vp).$$
This refinement is treated in \S \ref{s2-}.

We should recall that by Fritz John's famous theorem (see e.g. \cite{TJ,P1}), we have
$$d(E,\ell_2^n)\le \sqrt n$$ and hence
$$\forall E,F \in \cl B_n\quad d(E,F)\le   n.$$
Thus the Euclidean space $\ell_2^n$ is a natural center for $\cl B_n$ and
the $d$-diameter of $\cl B_n$ is at most $n$. Moreover, 
there is
a number $\delta>0$ such that for each $n$ there are $n$-dimensional spaces $E_n,F_n$
such that $d(E_nF_n)>\delta n$. This is due to Gluskin \cite{G} (see also \cite{Sz}).
It would be interesting to estimate the growth of  $N(\cl B_n,d, \vp_n)$
when $\vp_n$ is of the same order as $n$.
Similar questions can be raised for the ``packing number"
$M(\cl B_n,d, 1+\vp)$ which is defined as the maximal number of
elements in $\cl B_n$ at mutual distance $\ge 1+\vp$.
Note that $N(\cl B_n,d, 1+\vp )\le M(\cl B_n,d, 1+\vp)\le N(\cl B_n,d, (1+\vp)^{1/2})$
for any $\vp>0$.
 See Remarks \ref{glu} and \ref{lrt} below for some results in that direction,
 based on Gluskin's ``random Banach spaces".

We should mention that somewhat related results appear in \cite{STJ1,STJ2}.
Also \cite{G2,Sz3,Sz2,Sz4,BV} is  recommended reading to anyone interested   
in the subject of this note.

While  \S \ref{s1} is devoted to the metric entropy of the space of $n$-dimensional Banach spaces,
\S \ref{s2} treats the case of operator spaces, which can be viewed as ``non-commutative"  or
``matricial" analogues of Banach spaces. The latter are just Banach spaces
given together with a specific embedding in the algebra $B(H)$ of all bounded operators on a Hilbert space $H$.
There the Banach-Mazur distance $d$ is replaced by
the matricial analogue $d_N$ defined in \eqref{eq77} below, depending on the size $N$ of the matrices being used.
When $N=1$ we recover the distance $d$.

For the operator space case in \S \ref{s2}, the general structure of our argument
is modeled on that of \S \ref{s1}. However,
the ingredients   are more involved.  We make crucial use of our previous results from \cite{P9} on quantum expanders,
and we carefully explain in \S \ref{s3} why they are needed in the matricial case.

\begin{rem}\label{sz} 
I am grateful to S. Szarek for the following information. The upper bound in \eqref{eq01} is known and should be considered part of the folklore. The lower bound was known to Joram Lindenstrauss and Szarek
for say $\vp=1$ at least in the 1990's. The referee of \cite {STJ1}  insisted 
that it should be attributed to \cite{Br}, which, in retrospect,  was not appropriate
since the entropy estimates in \cite{Br}  were  for a fixed dimension
and for $\vp$ small enough. In any case, both upper and lower bounds in \eqref{eq01} did not seem to have appeared explicitly in print yet when we submitted this paper. See however Remark \ref{lrt} for an update concerning \cite{LRT}.\end{rem}

\section{Packing and covering the Banach-Mazur compactum}\label{s1}

In this section,  all the metric properties (such as nets and balls) are relative to the Banach-Mazur distance $d$.
Thus to abbreviate, we will denote simply
$N(\cl B_n,1+\vp)$ instead of $N(\cl B_n,d,1+\vp)$. \\ More generally, for any subset $S\subset \cl B_n$ 
and $r>1$ we denote by $N(S,r)$
 the smallest number of open balls (centered in $S$)
of radius $r$
 that cover $S$.  Note that $N(S,r)\le N(\cl B_n,r^{1/2})$.
 
 Similarly we will denote by $M(S,r)$ (``packing number") the maximal cardinality
 of a finite subset $A\subset S$
 such that $d(s,t)\ge r$ for all $s\not=t\in A$.   It is well known    (and easy to check) that
 for any $r\ge 1$
$N(S,r)\le  M(S,r)\le N(S,r^{1/2})$.

Our main result is:

\begin{thm} \label{t1} 
For any $r>1$,
there are   positive constants $b_r,c_r$
such that,    for any integer $n$  assumed large enough
(more precisely for $n\ge n_0(r)$),  we have
$$ \exp(\exp b_r n) \le N(\cl B_n,r)\le \exp(\exp c_r n).$$
\end{thm}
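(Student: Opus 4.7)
The theorem provides matching double-exponential upper and lower bounds for $\log\log N(\cl B_n,r)$. My plan for the upper bound is to embed each $F\in\cl B_n$ almost-isometrically into $\ell_\infty^N$ with $N$ single-exponential in $n$, then cover the ambient Grassmannian by a standard net. For the substantive lower bound, I would construct an explicit doubly-exponential family of symmetric convex bodies by adding ``spikes'' of appropriate height to $B_2^n$ at subsets of a fine spherical net, and extract a Banach--Mazur packing via a $GL_n$-net rigidity argument.

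\textbf{Upper bound.} Fix $r>1$ and $\eta\in(0,r-1)$. For each $F\in\cl B_n$, evaluation against an $\eta$-net $\cl N\subset B_{F^*}$ of cardinality $N\le(C/\eta)^n$ yields a $(1+O(\eta))$-isomorphic embedding $F\hookrightarrow\ell_\infty^N$; so $\cl B_n$, modulo BM factor $1+O(\eta)$, is covered by $n$-dimensional subspaces of $\ell_\infty^N$, a quotient of $G(n,N)$. The latter admits a $\delta$-net (in the operator-norm distance between orthogonal Euclidean projections) of cardinality at most $\exp(CnN\log(1/\delta))$; for $E,E'\subset\R^N$ at such projection-distance, $T:=P_{E'}|_E$ satisfies $\|Tx-x\|_{\ell_\infty^N}\le\|(P_{E'}-P_E)x\|_{\ell_2^N}\le\delta\sqrt N\,\|x\|_{\ell_\infty^N}$ (using $\|\cdot\|_{\ell_\infty^N}\le\|\cdot\|_{\ell_2^N}\le\sqrt N\,\|\cdot\|_{\ell_\infty^N}$), giving $\ell_\infty^N$-BM distance $\le 1+O(\delta\sqrt N)$. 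Taking $\delta\sim(r-1)/\sqrt N$ yields $\log N(\cl B_n,r)\le O(nN\log N)=O(n^2(C/\eta)^n)$, hence $\log\log N(\cl B_n,r)\le c_r n$.

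\textbf{Lower bound and main obstacle.} For $r>1$, pick $\rho=\rho(r)>4r^2-3$ and let $\cl M\subset S^{n-1}$ be an antipodally symmetric, Euclidean $\sqrt{2(1-1/\rho)}$-separated subset; by Gilbert--Varshamov-type bounds on spherical codes, $|\cl M|\ge(c_\rho)^n$ for some $c_\rho>1$. For each $S\subseteq\cl M$, set $K_S:=\operatorname{conv}\bigl(B_2^n\cup\{\pm\rho y:y\in S\}\bigr)$, a symmetric convex body in $\{K:B_2^n\subseteq K\subseteq\rho B_2^n\}$; let $E_S\in\cl B_n$ be the associated normed space. Since no $y\in\cl M\setminus\{y_0\}$ has $|\langle y_0,y\rangle|>1/\rho$, the support function $h_{K_{S'}}(y_0)=1$ for any $y_0\notin S'$, so distinct $S,S'$ give bodies at Hausdorff distance $\ge\rho-1$; the family has cardinality $2^{|\cl M|}=\exp(\exp(c_r n))$. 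To pass to BM: if $d(E_S,E_{S'})\le r^2$ via $u\in GL_n$ normalized so that $K_{S'}\subseteq uK_S\subseteq r^2K_{S'}$, then $|uK_S-K_{S'}|_{\text{Haus}}\le 2(r^2-1)$ and $\|u\|,\|u^{-1}\|=O_r(1)$ in $\ell_2$-operator norm (both bodies sit in the $\rho$-annulus). Covering the admissible $u$'s by an operator-norm $\delta$-net with $\delta=\Theta_r(1)$ chosen so that $\delta\rho+4(r^2-1)<\rho-1$, of cardinality $\exp(O_r(n^2))$, one checks that each net element pairs $S$ with at most one $S'$ (by the Hausdorff $\rho-1$ separation of the family). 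Hence the $r^2$-BM-ball around $E_S$ meets the family in at most $\exp(O_r(n^2))$ points, so $M(\cl B_n,r^2)\ge\exp(\exp(c_r n))/\exp(O_r(n^2))\ge\exp(\exp(b_r n))$ for $n\ge n_0(r)$. Combined with the introduction's inequality $N(\cl B_n,r)\ge M(\cl B_n,r^2)$, this yields the theorem. The main technical obstacle is the $GL_n$-rigidity step: quantitatively verifying that a $u$ with controlled $\ell_2$-operator norm can only ``match'' the spikes of $K_S$ with those of $K_{S'}$ in essentially one way, which follows from the angular separation of $\cl M$ but requires careful bookkeeping of the $2(r^2-1)$ Hausdorff error.
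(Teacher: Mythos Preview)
Your overall architecture matches the paper's: for the upper bound you embed into $\ell_\infty^N$ with $N$ single-exponential and then net the Grassmannian, exactly as the paper does via its Lemmas~1.1--1.2; and for the lower bound you build doubly-exponentially many bodies by attaching ``spikes'' indexed by subsets of a large separated set, which is precisely the dual of the paper's construction (the paper intersects $B_{\ell_\infty^n}$ with slabs indexed by subsets of a quasi-orthogonal set in $\{-1,1\}^n$; one of its remarks even records your spherical-code variant). The idea of passing from the identity-map separation to a genuine Banach--Mazur packing via an $\exp(O_r(n^2))$-sized net of admissible linear maps is also the paper's.

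However, your $GL_n$-rigidity step has a genuine quantitative gap. From $K_{S'}\subseteq uK_S\subseteq r^2K_{S'}$ with $K_{S'}\subseteq\rho B_2^n$ you only get
\[
d_{\mathrm{Haus}}(uK_S,K_{S'})\;\le\;\sup_{x\in uK_S}\Bigl|x-\tfrac{x}{r^2}\Bigr|\;\le\;(1-r^{-2})\,r^2\rho\;=\;(r^2-1)\rho,
\]
not $2(r^2-1)$: the spikes of $uK_S$ can reach out to radius $r^2\rho$, so the $\rho$ factor is unavoidable. Feeding the correct bound into your triangle inequality gives $d_{\mathrm{Haus}}(K_{S'},K_{S''})\le 2\rho(\delta+r^2-1)$, and comparing with the separation $\rho-1$ forces $r^2-1<\tfrac12(1-1/\rho)<\tfrac12$, i.e.\ the argument collapses once $r\ge\sqrt{3/2}$, for \emph{every} choice of $\rho$. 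Since $N(\cl B_n,r)$ is decreasing in $r$, the small-$r$ case does not imply the large-$r$ one, so this is a real hole.

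The fix (and this is exactly what the paper does) is to abandon Hausdorff distance and exploit directly that, after restricting to the antichain $\{S:|S|=|\cl M|/2\}$, one has $\|Id:E_{S''}\to E_{S'}\|\ge\rho$ for \emph{all} $S'\neq S''$. If $u^{S'},u^{S''}$ both witness $d(\,\cdot\,,E_S)<r^2$ and lie within $\eta/\rho$ of the same net point in the unit ball of $B(\ell_2^n,E_S)$ (note $\|v:\ell_2^n\to E_S\|\le\|v:E_{S'}\to E_S\|\le\rho\|v:\ell_2^n\to E_S\|$), then $\|u^{S'}:E_{S''}\to E_S\|\le 1+\eta$ and hence
\[
\rho\;\le\;\|Id:E_{S''}\to E_{S'}\|\;\le\;\|(u^{S'})^{-1}:E_S\to E_{S'}\|\cdot\|u^{S'}:E_{S''}\to E_S\|\;<\;r^2(1+\eta),
\]
a contradiction once $\rho>r^2(1+\eta)$. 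This works for every $r>1$ by choosing $\rho$ large, and gives the same $\exp(O_r(n^2))$ bound on each ball that you were aiming for.
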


While the upper bound is easy and already known to specialists, the lower bound is a bit more delicate. Curiously, for the latter
we were inspired by the operator space analogues developed   in \cite{ P9}  following the ideas in
  \cite{JP}.
In the second part of this note, we expand on the first attempt
of \cite{P9} and give the matricial analogue of the preceding theorem.

 The first two  Lemmas are classical facts.
 \begin{lem}\label{l22}  Let $X$ be an $m$-dimensional Banach space.
Let $S(n,X)\subset \cl B_n$ be the subset formed by all the $n$-dimensional subspaces of $X$. 
Consider $0<{\xi}<1/n$ and let $R=(1+{\xi} n)(1-{\xi} n)^{-1}$.
Then $S(n,X)$ admits an $R$-net with at most $(1+2/{\xi})^{nm}$.
In other words,
$$N(S(n,X), R)\le (1+2/{\xi})^{nm}.$$
\end{lem}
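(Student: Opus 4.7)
The plan is a standard Auerbach-basis plus net argument. First, since $X$ is an $m$-dimensional Banach space, a classical volume comparison in the unit ball $B_X$ (relative to Lebesgue measure on $\R^m$) yields a $\xi$-net $\cl N \subset B_X$ of cardinality $|\cl N|\le (1+2/\xi)^m$. The candidate net for $S(n,X)$ will consist of all subspaces of the form $\tilde E=\mathrm{span}(\tilde e_1,\dots,\tilde e_n)$ with each $\tilde e_i\in \cl N$; there are at most $|\cl N|^n \le (1+2/\xi)^{nm}$ such subspaces, which gives the desired counting bound.

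To see that every $E\in S(n,X)$ is within Banach-Mazur distance $R$ of one of these candidates, I would invoke the Auerbach lemma: every $n$-dimensional Banach space $E$ admits a basis $(e_1,\dots,e_n)$ with biorthogonal functionals $(e_1^*,\dots,e_n^*)$ such that $\|e_i\|=\|e_i^*\|=1$. For any $x=\sum a_i e_i \in E$ we then have $|a_i|=|e_i^*(x)|\le \|x\|$. Choose $\tilde e_i\in \cl N$ with $\|e_i-\tilde e_i\|\le \xi$ and define $u:E\to X$ by $u(\sum a_i e_i)=\sum a_i \tilde e_i$. Then
\[
\|(u-\mathrm{id}_E)(x)\| = \Bigl\|\sum a_i(\tilde e_i-e_i)\Bigr\|\le \xi \sum |a_i| \le \xi n\|x\|,
\]
so $\|u-\mathrm{id}_E\|\le \xi n<1$.

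Since $\xi n<1$, $u$ is an isomorphism onto its image $\tilde E = \mathrm{span}(\tilde e_1,\dots,\tilde e_n)$ with $\|u\|\le 1+\xi n$ and $\|u^{-1}\|\le (1-\xi n)^{-1}$. Thus
\[
d(E,\tilde E)\le \|u\|\,\|u^{-1}\|\le \frac{1+\xi n}{1-\xi n}=R,
\]
exactly as required. Since $\tilde E$ belongs to the previously described candidate family, the total count of $(1+2/\xi)^{nm}$ spans is an upper bound for $N(S(n,X),R)$.

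The only mildly delicate point is ensuring the coefficients in the expansion $x=\sum a_i e_i$ are individually controlled by $\|x\|$, which is precisely why one uses an Auerbach basis rather than an arbitrary one; with an Auerbach basis the error estimate $\|u-\mathrm{id}_E\|\le n\xi$ is automatic, and everything else is bookkeeping. The constraint $\xi<1/n$ is exactly what is needed to make $u$ invertible and to make $R$ finite.
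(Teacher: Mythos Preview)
Your proof is correct and follows essentially the same route as the paper: take a $\xi$-net in $B_X$ of size $(1+2/\xi)^m$, approximate an Auerbach basis of $E$ termwise, and use the Auerbach inequalities to get $\|u-\mathrm{id}_E\|\le \xi n$, hence $d(E,\tilde E)\le R$. The only cosmetic difference is that you phrase the distance estimate via the Neumann-series bound $\|u^{-1}\|\le(1-\xi n)^{-1}$, whereas the paper writes out the two-sided inequality $(1-\xi n)\|\sum x_je_j\|\le\|\sum x_jf_j\|\le(1+\xi n)\|\sum x_je_j\|$ directly; these are the same computation.
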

\begin{proof}  By Auerbach's classical Lemma, any $E\in \cl B_n$ admits
a basis $e_j$ such that for any $n$-tuple of scalars $x=(x_j)$ we have
$$\sup|x_j|\le \|  \sum x_j e_j \| \le \sum |x_j|.$$
Let $T\subset B_X^m$ be a ${\xi}$-net in the unit ball of $X$
with  $|T|\le (1+2/{\xi})^{m}$ (see e.g. \cite[p. 49]{P1}).
Given an arbitrary $E\in S(n,X)$ with Auerbach basis $(e_j)$ we may choose
$f=(f_j)\in T^n$ such that $\sup\|e_j-f_j\|\le {\xi}$.
Then we have for any $n$-tuple of scalars $x=(x_j)$
$$\|\sum x_j e_j\|\le \|\sum x_j f_j\|+ {\xi} \sum |x_j|\le  \|\sum x_j f_j\|+ {\xi} n \sup |x_j|\le \|\sum x_j f_j\|+ {\xi} n \|\sum x_j e_j\|$$

$$\|\sum x_j e_j\|\ge \|\sum x_j f_j\|- {\xi} \sum |x_j|\ge  \|\sum x_j f_j\|- {\xi} n \sup |x_j|\ge \|\sum x_j f_j\|- {\xi} n \|\sum x_j e_j\|$$
and hence
$$  (1-{\xi} n) \|\sum x_j e_j\|   \le   \|\sum x_j f_j\|  \le (1+{\xi} n) \|\sum x_j e_j\|$$
from which we deduce that if we set $E_f={\rm span}[f_j]$ we have
$$d(E,E_f)\le R.$$
Thus $\{E_f\mid f\in T^n\}$ is the desired $R$-net, and $|T^n|\le |T|^n \le (1+2/{\xi})^{nm}$.
\end{proof}
\begin{lem}  Fix $0<\delta<1$. Let $m$ be the largest integer such that $m\le (1+2/\delta)^n$.\\
For any $n$-dimensional space $E$  there is a subspace $F\subset \ell_\infty^m$ such that 
$$d(E,F)\le  (1-\delta)^{-1} .$$
\end{lem}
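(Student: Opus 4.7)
The plan is to embed $E$ into $\ell_\infty^m$ by evaluating against a fine enough net of functionals on the dual. This is a standard construction, and the cardinality bound $(1+2/\delta)^n$ for the net is exactly what drives the dimension $m$.

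First I would pass to the dual $E^*$, which is again $n$-dimensional. Using the standard volumetric estimate for nets in the unit ball of an $n$-dimensional normed space (the same fact cited from \cite[p.~49]{P1} already used in the proof of Lemma~\ref{l22}), I would pick a $\delta$-net $T=\{x_1^*,\dots,x_m^*\}\subset B_{E^*}$ with $|T|\le (1+2/\delta)^n$; by definition of $m$ this is consistent with $m$ being the largest integer below $(1+2/\delta)^n$ (and one can pad $T$ by repetition if necessary to make $|T|=m$ exactly, which does not spoil any inequality).

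Next I would define the evaluation map
\[
u:E\to \ell_\infty^m,\qquad u(x)=\bigl(x_1^*(x),\dots,x_m^*(x)\bigr),
\]
and set $F=u(E)\subset \ell_\infty^m$. The upper bound $\|u(x)\|_\infty\le \|x\|$ is immediate from $\|x_i^*\|\le 1$. For the lower bound, pick, for a given $x\in E$, a norming functional $x^*\in B_{E^*}$ with $x^*(x)=\|x\|$, then choose $x_i^*\in T$ with $\|x^*-x_i^*\|\le \delta$; this yields $|x_i^*(x)|\ge (1-\delta)\|x\|$, hence $\|u(x)\|_\infty\ge (1-\delta)\|x\|$. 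Thus $\|u\|\le 1$ and $\|u^{-1}:F\to E\|\le (1-\delta)^{-1}$, giving $d(E,F)\le(1-\delta)^{-1}$ as required.

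There is essentially no obstacle here: the only thing to be mildly careful about is the interplay between the net cardinality bound $(1+2/\delta)^n$ and the definition of $m$, which is handled by the padding trick above so that $F$ genuinely sits inside $\ell_\infty^m$ with the prescribed $m$. The deeper role of this lemma is that, combined with Lemma~\ref{l22} applied to $X=\ell_\infty^m$, it reduces estimating $N(\cl B_n,R)$ to counting $n$-dimensional subspaces of a single $m$-dimensional space, which is what will later produce the upper bound $\exp(\exp c_r n)$ in Theorem~\ref{t1}.
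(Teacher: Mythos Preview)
Your argument is correct and is essentially the same as the paper's: both take a $\delta$-net in $B_{E^*}$ of cardinality at most $(1+2/\delta)^n$, embed $E$ into $\ell_\infty$ over this net via evaluation, and read off the two-sided norm estimate $(1-\delta)\|x\|\le \sup_{t\in T}|t(x)|\le \|x\|$ from the norming-functional-plus-net trick. Your explicit padding remark (to make $|T|=m$ exactly) is a small clarification the paper leaves implicit, but there is no substantive difference in method.
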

\begin{proof}  Let $T$ be a $\delta$-net in the unit ball of $E^*$
with  $|T|\le (1+2/\delta)^{m}$ (see e.g. \cite[p. 49]{P1}).
We claim that for any $x\in E$ we have
$$(1-\delta) \|x\| \le \sup_{t\in T}|t(x)| \le \|x\| .$$
Indeed, choose $s\in B_{E^*}$ such that $\|x\|=|s(x)|$ and then $t\in T$ such that $\|s-t\|\le \delta$.
We have then
$$\|x\|=|s(x)|\le  |t(x)|+|(s-t)(x)|\le \sup_{t\in T}|t(x)|+ \delta \|x\|$$
from which the claim follows. The Lemma is then clear since
this claim implies $d(E,F)\le  (1-\delta)^{-1}$
where $F=\{\hat x\mid x\in E\}\subset \ell_\infty(T)$
 is the subspace defined by setting $\hat x(t)=t(x)$ for all $t\in T$.
  \end{proof}
  \begin{rem} See \cite[Cor. 1.2]{Bar} for a recent refinement of the estimate in Lemma \ref{l22}.
  \end{rem}  
  \begin{proof}[Proof of Theorem \ref{t1} (Upper bound)]  Fix $0<\vp<1$. Let $\xi= \vp/n$.
  Then $R=(1+\vp)(1-\vp)^{-1}$
   Let $m=[ (1+2/\delta)^n  ]$.  Combining the two Lemmas
   we find  
   $$N({\cl B}_n ,(1-\delta)^{-1}R)   \le  (1+2/{\xi})^{nm}\le (1+2n/{\vp})^{nm}\le (3n/{\vp})^{nm}=\exp(nm \log(3n/{\vp})$$
   and a fortiori
   $$N({\cl B}_n,  (1-\delta)^{-1}R)   \le\exp(n \log(3n/{\vp})\exp n \log(3/\delta)).$$
   So if we take, say $\delta=\vp$, there is clearly $c'_\vp>0$ such that
   $\exp(n \log(3n/{\vp})\exp n \log(3/\vp))\le \exp \exp c_\vp n$ for all $n$,
   and $(1-\delta)^{-1}R=(1+\vp)(1-\vp)^{-2}$, so we conclude
   $$N({\cl B}_n,  (1+\vp)(1-\vp)^{-2})   \le  \exp \exp c'_\vp n,$$
   which is clearly equivalent to an upper bound of the announced form, at least for all 
   $r= 1+\vp$ with $\vp$
   small enough. But, since the upper bound becomes easier for larger values
   of $\vp$, the    proof of the upper bound is complete.
\end{proof}
\def\t {\cl T}
\def\d{\delta}

\begin{lem}\label{l11}  Let $a>0$. Let $A\subset \{-1,1\}^n$ be a (measurable) subset with $\bb P(A)> a$.
  Then, for any $0<{\theta}<1$,  
 $A$ contains  a finite  subset $T\subset A$ 
 with $$|T|\ge (a/2)\exp {\theta}^2n/2,$$
  such that
 $$\forall s\not= t\in T\quad  |\sum s_j t_j| \le   {\theta} n.$$
 \end{lem}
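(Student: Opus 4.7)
The plan is a standard greedy/maximality argument combined with the Chernoff--Hoeffding bound for sums of independent signs. Concretely, let $T\subset A$ be a maximal subset with the property that $|\sum_j s_jt_j|\le\theta n$ for all $s\ne t$ in $T$; such a $T$ exists by Zorn (or just by building it greedily, since $A$ is finite up to measure zero). By maximality, for every $s\in A$ there exists $t\in T$ with $|\sum_j s_j t_j|>\theta n$, and in fact we can include $s=t$ in this statement since $|\sum_j t_j^2|=n>\theta n$. Hence
\[
A\ \subseteq\ \bigcup_{t\in T} B_t,\qquad B_t:=\Bigl\{s\in\{-1,1\}^n\ :\ \bigl|\textstyle\sum_j s_j t_j\bigr|>\theta n\Bigr\}.
\]

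The next step is the key probabilistic estimate: for each fixed $t\in\{-1,1\}^n$, the random variables $X_j:=s_jt_j$ (with $s$ uniform on $\{-1,1\}^n$) are i.i.d.\ uniform on $\{-1,1\}$, so by Hoeffding's inequality,
\[
\P(B_t)\ =\ \P\!\left(\bigl|\textstyle\sum_j X_j\bigr|>\theta n\right)\ \le\ 2\exp(-\theta^2 n/2).
\]
Combining this with the union bound applied to the covering above,
\[
a\ <\ \P(A)\ \le\ \sum_{t\in T}\P(B_t)\ \le\ |T|\cdot 2\exp(-\theta^2 n/2),
\]
which rearranges to $|T|\ge (a/2)\exp(\theta^2 n/2)$, as required.

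There is really no serious obstacle here: the only point that deserves a second glance is making sure that $A$ is actually covered by the $B_t$'s (including the element $t$ itself in $B_t$ handles the boundary case $s=t$, which is what makes the bound on $|T|$ clean). The rest is Hoeffding plus a union bound, so the proof is quite short.
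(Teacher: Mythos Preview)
Your proof is correct and follows essentially the same route as the paper's: take a maximal $T\subset A$ with the desired separation, cover $A$ by the sets $B_t$ using maximality, and apply the subgaussian tail bound $\P(B_t)\le 2\exp(-\theta^2 n/2)$ (the paper invokes translation invariance where you note $s_jt_j$ are i.i.d.\ signs, which is the same observation). Your explicit handling of the boundary case $s=t\in T$ via $\sum_j t_j^2=n>\theta n$ is a nice touch that the paper leaves implicit.
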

 
\begin{proof} Let $\Omega=\{-1,1\}^n$ equipped with the uniform probability $\P$.
Let $T$ be a maximal subset of $A$ such that
 $\forall s\not= t\in T\quad  |\sum s_j t_j| \le   {\theta} n.$ Then, by the maximality of $T$,
 for all ${\omega}\in A$ there is $t\in T$ such that 
 $|\sum {\omega}_j t_j|>{\theta} n$. Therefore
 $\P(A)\le \sum_{t\in T} \P\{  |\sum {\omega}_j t_j|>{\theta} n\}\le |T| \P\{|\sum {\omega}_j |>{\theta} n\}$.
 The last inequality holds because, by translation invariance on $\{-1,1\}^n$, $\P\{\omega\mid  |\sum {\omega}_j t_j|>{\theta} n\}$
 is independent of $t=(t_j)$. But now it is a classical fact that
 $\P\{|\sum {\omega}_j |>{\theta} n\}\le 2 \exp -{\theta}^2n/2$ for any ${\theta}>0$
 and hence
 $$|T|\ge (a/2)\exp {\theta}^2n/2.$$
 From this follows that for all $n$ large enough (i.e. $n\ge n(a,{\theta})$)
 we have, say,  $$|T|\ge  \exp {\theta}^2n/3.$$
\end{proof}
\def\b{\beta}
 
 \begin{lem}\label{l3}  Let $\t $ be any finite set with even cardinality $K$.
Let $P(\t)$ denote the set of all the $2^{K}$  subsets of $\t$.
Let $\cl A\subset P(\t)$ be the subset formed of subsets
with cardinality $K/2$.
Then
$$\forall x\not =y\in \cl A\quad x\not \subset y \quad {\rm and}\quad  y\not\subset x, $$
and we have
$| \cl A|\approx \sqrt{2/\pi}  K^{-1/2} 2^K$. A fortiori,
we have for all $K$ large enough
$$| \cl A| \ge \exp K/2.$$
\end{lem}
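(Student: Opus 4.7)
The plan is to dispatch the two assertions separately, since they are essentially independent. The antichain property is immediate: if $x,y\in\cl A$ satisfy $x\subset y$, then $|x|\le|y|$ with equality $K/2=K/2$, which forces $x=y$; so for $x\neq y$ in $\cl A$ neither can be contained in the other.

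For the cardinality, the key observation is simply that $|\cl A|=\binom{K}{K/2}$, and I would invoke Stirling's formula $K!\sim\sqrt{2\pi K}(K/e)^K$ applied to numerator and denominator. Writing
$$\binom{K}{K/2}=\frac{K!}{((K/2)!)^2}\sim\frac{\sqrt{2\pi K}(K/e)^K}{2\pi(K/2)\,((K/2e))^K}=\sqrt{\frac{2}{\pi K}}\,2^K,$$
which is exactly the claimed asymptotic $|\cl A|\approx\sqrt{2/\pi}\,K^{-1/2}\,2^K$.

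For the a fortiori lower bound, it suffices to compare $2^K/\sqrt K$ with $e^{K/2}$. Taking logarithms, the inequality $2^K/\sqrt K\ge e^{K/2}$ reduces to $K(\log 2-\tfrac12)\ge\tfrac12\log K$, and since $\log 2>\tfrac12$ the left side grows linearly while the right side grows only logarithmically, so the inequality (even with the factor $\sqrt{2/\pi}$ absorbed) holds for all sufficiently large $K$.

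There is no real obstacle here; the only mildly delicate point is verifying that the constants in Stirling's asymptotic combined with the constant $\sqrt{2/\pi}$ are harmless compared to the exponential gap between $2^K$ and $e^{K/2}$, which is trivial for $K$ large enough.
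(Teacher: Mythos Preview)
Your proof is correct and in fact more detailed than the paper's own, which simply reads ``Entirely elementary.'' Your argument (the antichain property from equal cardinalities, Stirling for the central binomial coefficient, and the comparison $\log 2>\tfrac12$ for the final bound) is exactly the standard route one has in mind.
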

\begin{proof}  Entirely elementary. 
\end{proof}

\begin{proof}[Proof of Theorem \ref{t1} (Lower bound)] 
Let $\t$  be the set $T$ appearing  in Lemma \ref{l11} 
for $A=\{-1,1\}^n$.  We may clearly assume $K$ even
(indeed removing one point will not spoil the estimates appearing below).
 Let $\cl A$ be as in Lemma \ref{l3}.
Recall that for any $x\in \cl A$ we have $x\subset \t \subset \{-1,1\}^n$. 
Let us denote by $\vp_j$ the coordinates on $\{-1,1\}^n$,
and by ${\vp _j}_{|x}\in \ell_\infty(x)$ the restriction of $\vp_j$ to the subset $x$.\\
We denote by $(e_j)$ the canonical basis of $\R^n$ (or  $\C^n$ depending on the context).\\
We  use the same notation for  the basis of $\ell_p^n$ for $1\le p\le \infty$.\\
We then define for any $x\in \cl A$
$$f^x_j=e_j\oplus {\vp _j}_{|x}  \in   \ell^n_\infty\oplus_{\infty} \ell_\infty(x) .$$
and 
$$E_x={\rm span}[f^x_j, 1\le j\le n]\subset  \ell^n_\infty\oplus_{\infty} \ell_\infty(x) .$$
Note $\dim(E_x)=n$.  We will take the simplified viewpoint that
  each space $E_x$ is equal to $\R^n $ (or $\C^n $)  equipped with the norm
  $$\forall a\in \R^n\quad \|\sum a_j  e_j\|=  \| \sum a_j f^x_j\|_{E_x}=\max\{ \sup |a_j|, \sup_{t\in x} |\sum a_j t_j|\}.$$
  Thus any linear map $u:\ E_y\to E_z$ can be identified with an $n\times n$-matrix.
Since $\|f_j^x\|=1$  we have for any     any $x\in \cl A$
\begin{equation}\label{eq12} \forall (a_j)\in \C^n\quad  \sup |a_j| \le  \| \sum a_j f^x_j\|\le  \sum |a_j| .\end{equation}

Then,
assuming $n$ large enough (so that ${\theta} n\ge 1$), for any $y\not=z\in \cl A$ we can find $s\in y\setminus z$  
such that 
$$\|\sum s_j f^y_j\|=n \quad {\rm but}\quad \|\sum s_j f^z_j\|\le{\theta} n.  $$
This proves  a lower bound $\|u\|\|u^{-1}\|\ge 1/{\theta}$ (and actually $\ge 1/{\theta}^2$) when $u:\ E_y\to E_z$ is the map
corresponding to the identity matrix. To obtain a similar lower bound for {\it any} $u$
the rough idea is that there are ``many more" $x$'s in $\cl A$ than there are linear maps
on an $n$-dimensional space.  \\
We will show that for any $r$ with   $1<r<1/{\theta}$ there is a subset $\cl A'\subset \cl A$ 
with $|\cl A'|\ge \exp\exp c_r n$ such that
$d(E_x,E_y)>r$ for any $x\not= y\in \cl A'$. (Perhaps one can show this even for $1<r<1/{\theta}^2$ but
we do not see how.)

 Let   $1+{\eta} = 1/(r{\theta} )$.    We will prove the following claim  
 (assuming we work with real spaces, but the complex case 
 is similar, requiring just $2n^2$ instead of $n^2$).\\
 {\bf Claim:} For any  fixed $x\in \cl A$ we have
    $$|\{y\in \cl A\mid d(E_x,E_y)<r\}|\le (1+4n/{\eta})^{n^2}.$$
 To prove this claim we use the classical fact that   the unit ball of the space $B(\ell_1^n, E_x    )$
 (being $n^2$ dimensional)  contains a ${\eta}/2n$-net of cardinality at most $(1+4n/{\eta})^{n^2}$.
  
 Note that,  by \eqref{eq12}, for any $y$, we have for any $n\times n$-matrix $u$
 \begin{equation}\label{eq330}\|u:\ \ell_1^n \to E_x\|\le  \|u:\ E_y \to E_x\|\le  n\|u:\ \ell_1^n \to E_x\|.\end{equation}
 
 For any $y$ with $d(E_x,E_y)<r$ there is a matrix $u^y$
 such that 
  \begin{equation}\label{eq331}\|u^y:\ E_y \to E_x\|\le 1 \quad{\rm and}\quad   \|(u^y)^{-1}:\ E_x \to E_y\|  < r.\end{equation}  Assume by contradiction
 that  $|\{y\in \cl A\mid d(E_x,E_y)<r\}|>(1+4n/{\eta})^{n^2}$. Then
 there must exist $y\not =z$ both in the set $|\{y\in \cl A\mid d(E_x,E_y)<r\}$
 such that the associated $ u^y$ and $ u^z$ are ${\eta}/2n$-close to the same point
 of the ${\eta}/2n$-net, and hence such that
 $\|    u^y- u^z :\ \ell_1^n\to E_x\| \le {\eta}/n$. 
 Therefore, by \eqref{eq330}, we have 
  $\|u^y-u^z:\ E_z\to E_x\|\le  {\eta},$  and hence by \eqref{eq331}
  \begin{equation}\label{eq332}\|u^y :\ E_z\to E_x\|\le 1+  {\eta}.\end{equation}
 But now recall that since $y\not =z$ with $y,z\in \cl A$, we already know that 
 $1/{\theta}\le \|Id:\ E_z\to E_y\|$. Thus we have
 $$  1/{\theta}\le \|Id:\ E_z\to E_y\| =\| (u^y)^{-1}u^y  :\ E_z\to E_y\|
 \le  \| (u^y)^{-1}:\ E_x\to E_y\|  \| u^y :\ E_z\to E_x\|$$
  and hence by \eqref{eq331} and \eqref{eq332}
 $$  1/{\theta}< r(1+{\eta}).$$
 Since this contradicts our initial choice of $r$, this proves the claim.\\
 Let $\cl X\subset \cl A$ be a maximal subset such that
 $d(E_x,E_y)\ge r$ for any $x\not= y\in \cl X$.
 Then $\cl A$ is covered by the balls of radius $r$ centered in the points of $\cl X$, and hence by the claim
 $$ |\cl A|\le |\cl X| (1+4n/\eta)^{n^2}\le |\cl X|  \exp{4n^3/{\eta} } .$$
 Thus we conclude
 $$|\cl X| \ge  |\cl A| \exp{-4n^3/{\eta} }.$$
 We now recall that we take $|A|=2^n$, $a=1$, $|T|=K\ge (1/2)\exp{ {\theta}^2 n/2}$
and  assuming $n$ large enough  we have
 $|\cl A|\ge \exp{K/2} =\exp( (1/4)\exp{ {\theta}^2 n/2})$.
 Therefore, elementary verifications lead to
 \begin{equation}\label{eq33} |\cl X| \ge  \exp [  \exp{ ({\theta}^2 n/4)}] .\end{equation}
 for all $n$ large enough (say $n\ge n_0({\theta},r)$).
At this point the lower bound follows immediately since $N(\cl X, r^{1/2})\ge M(\cl X, r)  = |\cl X|$.
\end{proof}
\begin{rem} One can replace $e_j$ by a suitable system of unimodular functions $w_j$ (such as the Walsh system, assuming $n$ dyadic)
for which it is known that $\|\sum \vp_jw_j\|_\infty\in o(n)$ on a set of large probability of choices of signs $(\vp_j)$. 
The proof then leads to a similar family of spaces $(E_x)$ but spanned by unimodular functions
in $\ell_\infty^m$ for   $m=n+K/2$.
\end{rem}
\begin{rem} The preceding argument actually shows more precisely:\\
For all $r>1$
$$\liminf_{n\to \infty} \frac{\log\log N( \cl B_n, r)}{n}\ge \liminf_{n\to \infty} \frac{\log\log M( \cl B_n, r^2)}{n}\ge \frac{1}{2r^2}
.$$
\end{rem}
\begin{rem}
Note that the
Elton-Pajor theorem from \cite{E,Pa} applies here to any of the spaces $E_x$.
Indeed, it is easy to check that the
average over all $\pm$ signs
of $\|\sum \pm f_j^x\| $ is $\ge c_{\theta} n$ for some $c_\theta>0$. This shows
that all these spaces (uniformly) contain  $\ell_1^k$'s
with dimension $k$ proportional to $n$.
\end{rem}

\begin{rem} The preceding construction of the spaces $E_x$ can be done
using a subset $T$
of the unit sphere  of $\ell_2^n$, denoted by $S(\ell_2^n)$. Indeed, assume that
we have
$$\forall s\not=t\in T\quad  |\langle s.t\rangle|\le {\theta}.$$
We set $K=|T|$ and (assuming $K$ even)  consider again the subset $\cl A\subset P(T)$
of subsets $x\subset T\subset S(\ell_2^n)$ with $|x|=K/2$.
Then for any $x\in \cl A$ we set
 $$e^x_j={\theta} e_j\oplus \vp^x_j  \in   \ell^n_2\oplus_{\infty} \ell_\infty(x) ,$$
 and $\cl E_x={\rm span}[e^x_j]\subset \ell_2^n\oplus_{\infty} \ell_\infty(x).$
 A similar reasoning leads to a separated family in $\cl B_n$.
 More precisely, if we denote by $K_{\theta}$ the smallest even $K$ for which there exists
 such a set $T$,
 and if we take, say, $\delta={\theta}^2$
 then we find for any $0<{\theta}<1$
 $$M( \cl B_n,  {\theta}^{-1}(1+{\theta})^{-1} )\ge    \gamma 2^{K_{\theta}} K_{\theta}^{-1/2}\exp{-4n^2/{\theta}^2 },$$
 which is significant if ${K_{\theta}}$ is significantly larger than $n^2/{\theta}^2$.
 See \cite[Chapter 9]{CS} for an account of the  works of Kabatiansky and Levenshtein estimating $K_{\theta}$.
\end{rem}

\begin{rem}\label{glu} Given a classical metric space $(T,d)$, suppose given a probability measure $\nu$ on $T$
such that any open ball of radius $r$ has $\nu$-measure $\le f(r)$ (resp. $\ge g(r)$). Then obviously we have
$M(T,d,r)\ge N(T,d,r)\ge 1/f(r)$ (resp. $N(T,d,r)\le M(T,d,r)\le 1/g(r/2)$). Gluskin's method in \cite{G,G2} uses  elements chosen at random in $\cl B_n$
according to a certain probability measure $\nu_n$ for which he proves
that (here $d$ denotes the Banach-Mazur  ``multiplicative" distance)
for some numerical constant $0<c<1$,
for any $E$ in the support of $\nu_n$, we have  
$$\nu_n\{ F\in \cl B_n\mid d(E,F) < c n \}  \le 2^{-n^2}.$$
See \cite[Prop. 1]{G2}. Thus it follows that,
if we denote by $\cl S_n$ the support of $\nu_n$, we have $$M(\cl B_n,  c n) \ge M(\cl S_n,  c n)\ge N(\cl S_n, c n)\ge 2^{n^2},$$
and a fortiori $N(\cl B_n, \sqrt{c n})\ge 2^{n^2}$.
\end{rem}
\begin{rem}\label{lrt} After a first version of this paper had been submitted, the paper \cite{LRT}
by Litvak, Rudelson and Tomczak-Jaegermann was brought to our attention.
It turns out that their  \cite[Corollary 2.4]{LRT} implies our Theorem \ref{t1}   (lower bound).
Their method is a variation of the Gluskin method. For any  $M$ such that $2n\le M\le e^n$ they produce a probability $\nu_{n,M}$
on ${\cl B}_n$ such that 
\begin{equation}\label{eq99}\nu_{n,M}\times \nu_{n,M} \left(\{  (E,F)\in    {\cl B}_n \times {\cl B}_n\mid d(E,F)< C{n/\log(M/n)}\}\right)\le  2 e^{-nM}\end{equation}
and consequently for any $E_0\in {\cl B}_n$ 
$$\nu_{n,M} (\{E\mid d(E,E_0)< \sqrt{Cn/\log(M/n)}\}  )\le \sqrt {2} e^{-nM/2}$$
where $C>0$ is an absolute constant.
This implies obviously
$$1=\nu_{n,M} ({\cl B}_n)\le N( {\cl B}_n,d, \sqrt{Cn/\log(M/n)}) \sqrt {2} e^{-nM/2},$$
so that choosing $M\approx n e^{nC/r^2}$  ($r>1$) one obtains the lower bound of our Theorem \ref{t1} in the form
$$     \exp{\exp{C'n/r^2} } \le N( {\cl B}_n,d, r)$$
where $C'>0$ is an absolute constant and we assume $n$ large enough (i.e. $n\ge n_0(r)$ for some $n_0(r)$).

Moreover by \eqref{eq99}, Fubini's equality and Tchebyshev's inequality, if we let
$$ A= \{ E \mid \nu_{n,M} \left(\{ F\mid d(E,F)<  C{n/\log(M/n)}  \}\right) \le 4 e^{-nM}\}$$  we have
$$\nu_{n,M}(\cl B_n \setminus A)\le1/2,$$
and hence
$$1/2 \le \nu_{n,M}( A)\le N(A, d, C{n/\log(M/n)}) 4 e^{-nM},$$
so that choosing $M=n\exp{n^\tau}$ ($0<\tau<1$) we find (assuming $n\ge n_0(\tau)$)
$$N(A,d,Cn^{1-\tau}) \ge (1/8) \exp{ (n^2\exp{n^\tau} )} \ge   \exp{  \exp{n^\tau} }. $$
A fortiori we have 
$$M(\cl B_n ,d,Cn^{1-\tau}) \ge M(A,d,Cn^{1-\tau}) \ge N(A,d,Cn^{1-\tau}) \ge \exp{  \exp{n^\tau} }. $$
\end{rem}
\begin{rem}\label{or} A natural question arises on the behaviour of the best possible constant $c_r$
in Theorem \ref{t1} when $r\to \infty$. In \cite{Sz5} Szarek proves that this is $O(1/r)$. More precisely, given $r > 2$ and $ n \in  \N$, his result is that every $n$-dimensional normed space embeds $r$-isomorphically  in $\ell_\infty^m$ for some $m\le n\exp{c'n/r}$.
Consequently (see Lemma \ref{l22} and note that we may assume $r\le \sqrt n$), the space ${\cl B}_n$ admits an $r$-net of cardinality at most
$\exp{\exp {c''n/r}}$
where $c',c''$ are positive absolute constants.
See also \cite[Cor. 1.3]{Bar} for a related result.
\end{rem}
\begin{rem}  We return to the set $S_n(X)$
of $n$-dimensional subspaces of $X$ defined in Lemma \ref{l22}. 
We first observe that, if  a  Banach space $X$  contains $\ell_\infty^n$'s uniformly,
then $S_n(X)=\cl B_n$, so the metric entropy of $S_n(X)$ is maximal.
In sharp contrast, if $X=\ell_2$, $S_n(X)$ is reduced to $\ell_2^n$ so
  the metric entropy of $S_n(X)$ is minimal.\\
  Assume that a  Banach space $X$ has the following property:
  any $n$-dimensional subspace $E\subset X$ embeds $c$-isomorphically
  into a fixed Banach space $X_n$ with $\dim(X_n)=m $.
  Consider $0<{\xi}<1/n$ and let $R=(1+{\xi} n)(1-{\xi} n)^{-1}$.
  Then Lemma \ref{l22}  implies
  $$N(S(n,X), c R)\le (1+2/{\xi})^{nm}.$$
  If this holds for $m=m(n)$ with $m(n)=2^{o(n)}$ then (choosing $\xi=\vp/n$)
  we have
   $$\log\log N(S_n(X),d, 1+\vp)\in o(n)$$
   for any $0<\vp<1$.\\
  When $X=\ell_1$ or $L_1$, it is proved in \cite{BLM} (see also \cite{S} and \cite{T}) that 
  for any $\vp>0$ 
  any $n$-dimensional subspace $E\subset X$ embeds $1+\vp$-isomorphically
  into $\ell_1^{c_\vp n (\log n)^3}$. This implies
  that $N(S_n(X),d, 1+\vp)$ is much smaller than
  the metric entropy  of $  \cl B_n$.
  In particular we have
   $$\log\log N(S_n(X),d, 1+\vp)\in o(n).$$
 By our initial observation, if,  for some $\vp>0$, we have
   $$\log\log N(S_n(X),d, 1+\vp)\in o(n)$$
   then $X$ does not contains $\ell_\infty^n$'s uniformly, and hence has finite cotype.\\
   It is tempting to wonder whether the converse holds (recall that 
   $X=\ell_1$ or $L_1$ are examples of cotype 2 spaces). This question is reminiscent
   of the conjecture formulated in \cite{Pm}.
\end{rem}
\n{\bf Problem.} Characterize the infinite dimensional Banach spaces $X$
such that  $$\log\log N(S_n(X),d, 1+\vp)\in o(n) \quad (n\to \infty)$$
for some (or all) $\vp>0$.

Note that this includes all $L_p$ spaces ($1\le p<\infty$) by \cite[Th. 7.3 and Th. 7.4]{BLM}.

Same question for  the class $SQ_n(X)$
of all $n$-dimensional subspaces of quotients of $X$. In that case, the $o(n)$-condition
implies that $X$ does not contains $\ell_1^n$'s uniformly, and hence has finite type.
\begin{rem}
Our main result suggests a number of other questions.
Among them in the style of \cite{TJ}
it is natural  to wonder how behaves the metric entropy of 
the class of spaces in $\cl B_n$ 
with either $1$-symmetric or $1$-unconditional bases.
\end{rem}

 \section{Large local metric entropy}\label{s2-}
In this section we formulate a refinement of our main result in \S \ref{s1}: even the balls 
around a single fixed space $E$ in $\cl B_n$
have extremely large metric entropy. 
While for {\it some} $E$ this can be deduced a priori from \S \ref{s1}  by an elementary   argument involving the maximal metric entropy of a ball of a fixed radius, it seems more surprising that it holds
for {\it all} $E$.  However, it turns out that, using \cite{QS}, we can essentially reduce to the case
  $E=\ell_2^n$.

More precisely we have:
 
\begin{thm}\label{local}
  For all $r,R$ such that  $1<r<R$ there is a constant  
  $c>0$ such that  for any $n$  assumed $n$ large enough (i.e. $n\ge n_0(r,R)$), for any $n$-dimensional Banach space $E$  
there is a subset $T_n\subset \cl B_n$ 
  with $$|T_n|\ge \exp\exp cn$$
  such that 
  $$\forall s, t\in T_n\quad d(t, s)> r.$$
  and
  $$\forall t\in T_n\quad d(t, E)<R.$$
\end{thm}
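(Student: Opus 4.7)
The plan is to first settle the Euclidean case $E = \ell_2^n$ and then reduce the general case using the QS machinery of \cite{QS}.

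For $E = \ell_2^n$, I adapt the construction in the last remark of \S\ref{s1}. Fix parameters $r < \lambda < R$ and $0 < \theta < 1/\lambda$. Let $\cl T \subset S(\ell_2^n)$ be a set of cardinality $K \ge \exp(c_\theta n)$ with $|\langle s,t\rangle| \le \theta$ for all distinct $s,t \in \cl T$ (produced from Lemma \ref{l11} applied to $\{-1,1\}^n$ and renormalized onto the unit sphere), and let $\cl A$ denote the family of size-$K/2$ subsets of $\cl T$, so $|\cl A| \ge \exp\exp(c'_\theta n)$. For each $x \in \cl A$, define $E_x := (\R^n, \|\cdot\|_x)$ with
\[
\|a\|_x := \max\bigl(\|a\|_2,\ \lambda\sup_{t\in x}|\langle t,a\rangle|\bigr).
\]
Then $\|a\|_2 \le \|a\|_x \le \lambda\|a\|_2$, so $d(E_x,\ell_2^n) \le \lambda < R$ for every $x \in \cl A$.

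For the separation, I follow the net argument in the lower bound of Theorem \ref{t1}, with $\ell_2^n$ replacing $\ell_1^n$. Suppose $d(E_x,E_y) < r$ for more than $(1+2\lambda/\delta)^{n^2}$ values of $y$, and choose isomorphisms $u^y:E_y\to E_x$ with $\|u^y\| \le 1$ and $\|(u^y)^{-1}\| < r$. Since $\|\cdot\|_y \in [\|\cdot\|_2,\lambda\|\cdot\|_2]$, each $u^y$ has norm at most $\lambda$ as a map $\ell_2^n \to E_x$, so a $\delta$-net in the $n^2$-dimensional ball $B(\ell_2^n,E_x)$ produces $y\ne z$ with $\|u^y - u^z:\ell_2^n \to E_x\| \le \delta$, hence $\|u^y - u^z:E_z \to E_x\| \le \delta$ (using $\|\cdot\|_z \ge \|\cdot\|_2$), and therefore $\|u^y:E_z \to E_x\| \le 1 + \delta$. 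On the other hand, any $s \in y\setminus z$ satisfies $\|s\|_z = 1$ and $\|s\|_y = \lambda$ (using $\lambda\theta < 1$), so $\|\mathrm{Id}:E_z \to E_y\| \ge \lambda$. Writing $\mathrm{Id} = (u^y)^{-1}u^y$ forces
\[
\lambda \le \|\mathrm{Id}:E_z \to E_y\| \le \|(u^y)^{-1}\|\cdot\|u^y:E_z \to E_x\| < r(1+\delta),
\]
a contradiction once $\delta < \lambda/r - 1$. A greedy extraction from $\{E_x\}_{x\in\cl A}$ then yields at least $|\cl A|/\exp(O(n^2)) \ge \exp\exp(c_{r,R}n)$ mutually $r$-separated spaces in the ball of radius $R$ around $\ell_2^n$.

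For a general $E \in \cl B_n$, I would invoke the QS theorem from \cite{QS}: $E$ has a quotient of a subspace of proportional dimension that is $C(\theta)$-close to Euclidean. Applying the Euclidean construction inside this QS yields doubly exponentially many spaces near $\ell_2^{\dim Y}$ and mutually separated; these are then lifted back through the subspace/quotient structure to produce $n$-dimensional deformations of $E$ lying within distance $R$ of $E$ and pairwise at distance $>r$, at the cost of absorbing $C(\theta)$ into the constants.

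The Euclidean case is essentially a careful calibration of the Theorem \ref{t1} template adapted to a small ball around $\ell_2^n$. The genuine difficulty is the reduction step: one must encode the Euclidean perturbations into genuine $n$-dimensional deformations of $E$ without collapsing the quantitative separation when passing through subspace and quotient operations. This is precisely the technical role of the QS machinery, guaranteeing enough Euclidean ``degrees of freedom'' in the local geometry of $\cl B_n$ near any $E$, even when $d(E,\ell_2^n)$ is as large as $\sqrt n$, so that the Euclidean family can be transported with only a constant loss in the parameters.
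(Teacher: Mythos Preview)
Your Euclidean case is correct and is essentially the paper's Lemma \ref{lemloc} specialized to $E=\ell_2^n$ (your norm $\max(\|a\|_2,\lambda\sup_{t\in x}|\langle t,a\rangle|)$ is the paper's $\max\{C^{-1}\theta\|a\|_E,\sup_{t\in x}|\xi_t(a)|\}$ up to rescaling, and the net argument is the same with $\ell_2^n$ in place of $\ell_1^n$, which is harmless since $\|\cdot\|_2\le\|\cdot\|_x\le\lambda\|\cdot\|_2$).

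The gap is in the reduction step. You write that the Euclidean family is ``lifted back through the subspace/quotient structure to produce $n$-dimensional deformations of $E$'', but you never say what this operation on \emph{spaces} is, and there is no canonical way to turn an $n'$-dimensional perturbation of $\ell_2^{n'}$ (where $n'\approx\delta n$ is the dimension of the QS) into an $n$-dimensional perturbation of $E$ while controlling both $d(\cdot,E)$ and the mutual separation. The QS theorem gives you Euclidean structure inside $E$, but it does not by itself tell you how to push norms back out.

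The paper's solution is to transport not the spaces but the \emph{data used to build them}: a system $(x_t,x^*_t)\in E\times E^*$ with $\|x_t\|,\|x^*_t\|\le C$, $x^*_t(x_t)=1$, $|x^*_s(x_t)|\le\theta$. Starting from the orthonormal-like system in the Hilbertian QS $Y=G/H$ (with $G\subset E$), one \emph{lifts} each $x_t\in Y$ to $G\subset E$ through the quotient map (norm loss $\le 1+\varepsilon$) and \emph{extends} each $x^*_t\in Y^*$ first to $G^*$ by composition with the quotient map and then to $E^*$ by Hahn--Banach (no norm loss). This yields the hypothesis of Lemma \ref{lemloc} \emph{for $E$ itself}, and the deformed norms $\|a\|_{E_x}=\max\{C^{-1}\theta\|a\|_E,\sup_{t\in x}|x^*_t(a)|\}$ are then genuine $n$-dimensional norms on $E$, with $d(E,E_x)\le C^2/\theta$ and $\|\mathrm{Id}:E_z\to E_y\|\ge\theta^{-1}$ witnessed by the lifted point $x_s$ for $s\in y\setminus z$. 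This is the concrete mechanism your last paragraph gestures at; once you supply it, your argument and the paper's coincide.
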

 \begin{rem} Note that it would be  optimal if $R$ could be any number such that
 $r<R^2,$
 with  $c=c_{r,R}$, but we could not reach this degree of precision.
 \end{rem}
 The proof is a simple modification of the proof of Theorem \ref{t1}  (lower bound).
\begin{lem} \label{lemloc}  Let $E$ be any $n$-dimensional space and $0<\theta<1$. Fix $c>0,C\ge 1 $.
Assume there a finite subset $ (x_t,x^*_t)_{t\in T} \subset E\times E^*$ with cardinality
  $|T|\ge \exp cn $
  such that $\forall s\not=t\in T$ we have
  $$\|x_t\|_E\le C \quad \|x_t\|_{E^*}\le C$$
  and 
  $$x^*_t(x_t)=1\quad |x^*_s(x_t)|\le \theta  .$$
  Then for any $r<1/\theta$, assuming $n$ large enough  (i.e. $n\ge n_0(r,\theta,c)$)
  there is a family of $n$-dimensional spaces $\{E_x\mid x\in {\cl X}\}$  with $|{\cl X}|\ge \exp {\exp (cn/2)} $ such that
  $$ \forall  x\in {\cl X}\quad d(E,E_x)\le C^2/\theta$$ and
  $$  \forall y\not=x\in {\cl X}\quad    d(E_y,E_x)\ge r . $$
Therefore, if $R=C^2/\theta$ and if $B_{E,R}$ denotes the $d$-ball centered in $E$ with radius $R$ in $\cl B_n$
we have 
  $$\exp {\exp (cn/2)} \le M(B_{E,R},d, r ) \le   N(B_{E,R},d, \sqrt r )  . $$
\end{lem}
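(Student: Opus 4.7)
The plan is to mirror the lower-bound argument of Theorem \ref{t1} line by line, replacing the combinatorial biorthogonal structure on $\{-1,1\}^n$ (the basis $e_j$ and sign-coordinates $\vp_j$ restricted to subsets of a near-orthogonal $T$) by the abstract almost-biorthogonal system $(x_t,x_t^*)_{t\in T}$ provided in the hypothesis. Set $K=|T|$ (discarding one point if needed so that $K$ is even) and let $\cl A\subset P(T)$ be the family of subsets of $T$ of cardinality $K/2$; by Lemma \ref{l3}, for $n$ large one has $|\cl A|\ge \exp(K/2)\ge \exp(\exp(cn)/2)$.

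For each $x\in\cl A$ I renorm $E$ by
$$\|v\|_{E_x}=\max\{(\theta/C)\|v\|_E,\ \sup_{t\in x}|x_t^*(v)|\},$$
equivalently viewing $E_x$ as the image of $E$ under $v\mapsto (\theta/C)v\oplus(x_t^*(v))_{t\in x}\in E\oplus_\infty\ell_\infty(x)$. The calibration $\theta/C$ is chosen to balance two conflicting requirements. From $\|x_t^*\|\le C$ one checks $\|Id:E\to E_x\|\le C$, and from $(\theta/C)\|v\|_E\le\|v\|_{E_x}$ one gets $\|Id:E_x\to E\|\le C/\theta$, hence $d(E,E_x)\le C^2/\theta=R$. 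The separation ingredient is: for $y\ne z$ in $\cl A$ and any $s\in y\setminus z$, the vector $x_s$ itself satisfies $\|x_s\|_{E_y}\ge |x_s^*(x_s)|=1$, while $\|x_s\|_{E_z}\le \theta$ (both $(\theta/C)\|x_s\|_E\le\theta$ and $\sup_{t\in z}|x_t^*(x_s)|\le\theta$, since $s\notin z$). Hence $\|Id:E_z\to E_y\|_{\rm op}\ge 1/\theta$ for all $y\ne z$ in $\cl A$.

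The main step, where I expect the bookkeeping to be most delicate, is bounding $|\{y\in\cl A\mid d(E_x,E_y)<r\}|$ by $M^{n^2}$ for a constant $M=M(r,\theta,C)$. Fix $x$ and, for each such $y$, pick $u^y$ with $\|u^y:E_y\to E_x\|\le 1$ and $\|(u^y)^{-1}:E_x\to E_y\|<r$. Sandwiching through the identity maps $E\to E_y$ and $E_x\to E$ gives $\|u^y:E\to E\|\le C^2/\theta$, so all these $u^y$ live in the operator-norm ball of radius $C^2/\theta$ in the $n^2$-dimensional space $B(E,E)$, which admits an $\eta'$-net of cardinality at most $(1+2(C^2/\theta)/\eta')^{n^2}$. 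If $\|u^y-u^z:E\to E\|\le\eta'$ for distinct $y,z$ in this set, the same sandwich yields $\|u^y-u^z:E_z\to E_x\|\le C^2\eta'/\theta=:\eta$, so $\|u^y:E_z\to E_x\|\le 1+\eta$ and hence $\|Id:E_z\to E_y\|\le r(1+\eta)$. Combined with the lower bound $\ge 1/\theta$ above, this forces $r(1+\eta)\ge 1/\theta$; choosing $\eta<(1-r\theta)/(r\theta)$ (possible exactly because $r<1/\theta$) yields a contradiction, proving the claim.

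A standard maximal argument then produces an $r$-separated subset $\cl X\subset\cl A$ with
$$|\cl X|\ge \frac{|\cl A|}{M^{n^2}}\ge \exp\bigl(\exp(cn)/2-n^2\log M\bigr)\ge \exp\exp(cn/2)$$
for all $n\ge n_0(r,\theta,c,C)$, the last inequality holding because the doubly-exponential term swallows any polynomial in $n$. Since every $E_x$ lies in $B_{E,R}$, the lower bound on $M(B_{E,R},d,r)$ follows, and $M(B_{E,R},d,r)\le N(B_{E,R},d,\sqrt r)$ is the standard packing-versus-covering relation recalled at the start of \S \ref{s1}.
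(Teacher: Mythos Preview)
Your proof is correct and follows essentially the same route as the paper's: the same renorming $\|a\|_{E_x}=\max\{(\theta/C)\|a\|_E,\sup_{t\in x}|x_t^*(a)|\}$, the same separation via $s\in y\setminus z$, and the same pigeonhole/net reduction to Theorem~\ref{t1}'s lower-bound argument. The only cosmetic difference is that you run the net argument in $B(E,E)$ (sandwiching via $(\theta/C)\|\cdot\|_E\le\|\cdot\|_{E_x}\le C\|\cdot\|_E$) rather than in $B(\ell_1^n,E_x)$ as in the proof of Theorem~\ref{t1}, and you spell out the final numerics where the paper simply says ``the proof can be completed just as above.''
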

\begin{proof} 
We assume $|T|$ even and we let $\cl A$ denote the set of all subsets   $x\subset \cl A$
with $|x|=|T|/2$. We then define $E_x$ as   equal to $E$  equipped with the norm
  $$\forall a\in E \quad \|a\|_{E_x}=  \max\{ C^{-1}\theta\|a\|_{E} \ , \  \sup_{t\in x} | \xi_t(a)  |\} .$$
Note that  
$$\forall a\in E \quad    C^{-1}\theta\|a\|_{E}  \le   \|a\|_{E_x}\le    C \|a\|_{E}  ,$$
and hence
$$d(E,E_x)\le C^2/\theta.$$
Moreover, for any $y\not=z\in \cl A$ we can find $s\in y\setminus z$  
so that 
$\|x_s\|_{E_y}=1$ but $\|x_s\|_{E_z}\le \theta$, and hence $\|E_z\to E_y\|\ge \theta^{-1}$.
Thus the proof can be completed just as above for Theorem \ref{t1} (lower bound).
\end{proof}

\begin{proof}[Proof of Theorem \ref{local}] We first observe that, e.g. by Lemma \ref{l11}, the assumption of Lemma
\ref{lemloc} is satisfied when $E=\ell_2^n$ for any $0<\theta<1$ (with $x_t=\xi_t$ and $C=1$). Moreover, by adjusting $c$   
the same still holds if $E$ admits either  a subspace (resp. a quotient)  or a subspace of a quotient  of proportional dimension (with a fixed proportion)
which is isometric to a Hilbert space. Indeed, the required  extension (resp. lifting)  is provided by the
classical Hahn-Banach Theorem.
Then the general case follows
from  Milman's proportional QS-theorem from \cite{QS} (see also \cite[p. 108]{P1}).
Indeed, given $r<R$, we may choose $0<\theta<1$ and $C>1$ so that
$r<\theta^{-1}$ and $C^2/\theta<R$. Then by \cite{QS} (see also \cite[p. 108]{P1}) there
is a proportion $0<\d<1$ (depending only on $C$) such that any $E\in \cl B_n$ admits a quotient of a subspace
of dimension $>\d n$ that is $C$-isomorphic to a Hilbert space. By the same (extension/lifting) argument as for the isometric case,
  the assumption (and hence the conclusion) of Lemma \ref{l11} holds for some adjusted value of $c$.
\end{proof}

\section{The matricial case}\label{s2}

We now turn to operator spaces, i.e. closed subspaces of the algebra $B(H)$ of all bounded operators on Hilbert space. The analogue of the norm becomes a sequence of norms: for each $N$ we consider the space $M_N(E)$ of 
$N\times N$ matrices with entries in $E$ equipped with the norm induced by $B(H\oplus\cdots\oplus H)$.
It is then customary to think of the sequence $\{\|. \|_{M_N(E)}\mid N\ge 1\}$ as the operator space analogue of the usual norm (corresponding to $N=1$). We refer to the books \cite{ER,P4}
for all unexplained terminology and for more background.

We wish to study the metric space $OS_n$ equipped with the
completely bounded  analogue
$d_{cb}$ of the distance $d$.
However,   
although it is complete, this space is not compact,
and by \cite{JP} it is not even separable.
Nevertheless, one may study the distance associated to a fixed size 
$N\ge 1$ for the matrix coefficients,
as follows.

Let $u:\ E \to F$ be a linear map between operator spaces.
We denote  
$$u_N =Id\otimes u :\ M_N(E)\to M_N(F).$$
If $E,F$ are two operator spaces that are isomorphic as Banach spaces, we set
\begin{equation}\label{eq77}d_N(E,F)=\inf \{ \|u_N\| \|(u^{-1})_N\| \}\end{equation}
where the inf runs over all the isomorphisms $u:\ E \to F$.\\
We set $d_N(E,F)=\infty$ if $E,F$ are not isomorphic.\\
Recall that $$\|u\|_{cb}=\sup\nolimits_{N\ge 1} \|u_N\|.$$
Recall also that, if $E,F$ are completely isomorphic, we set
$$d_{cb}(E,F)=\inf \{ \|u\|_{cb} \|u^{-1}\|_{cb} \}$$
where the inf runs over all the complete isomorphisms $u:\ E \to F$.\\
When $E,F$ are both $n$-dimensional, using a compactness argument,
  it is easy to  show that $d_N(E,F)=1$
iff there is an isomorphism $u:\ E\to F$ such that $u_N$ is isometric.
Moreover, again by a compactness argument, one can check easily that 
$$d_{cb}(E,F)=\sup\nolimits_{N\ge 1} d_N(E,F).$$

Our main result is :

\begin{thm} \label{t2} 
For any $r>1$,
there are   positive constants $b_r ,c_r $
such that,  for any $n$  assumed large enough
(more precisely $n\ge n_0(r)$), we have for all $N\ge 1$
$$ \exp(\exp b_r nN^2) \le N({\cl OS}_n,d_N, r)\le \exp(\exp c_r  nN^2).$$
\end{thm}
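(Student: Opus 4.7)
The argument will follow the structure of the proof of Theorem \ref{t1}, with quantum expanders playing the role of Bernoulli signs (Lemma \ref{l11}). The extra factor $N^2$ in the exponent reflects that the relevant dimensions (the ball $M_N(E^*)$, the matricial ``inner products'' between unitary tuples, etc.) now scale like $nN^2$ instead of $n$.

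For the upper bound, I would develop matricial versions of Lemma \ref{l22} and the embedding lemma that follows it. First, an Auerbach-type argument at matrix level $N$ shows that for any $m$-dimensional operator space $X$, the set $S(n,X)$ of its $n$-dimensional subspaces admits a $d_N$-$R$-net of cardinality at most $(1+2/\xi)^{nmN^2}$, with $R=(1+\xi n)(1-\xi n)^{-1}$: the exponent is $nmN^2$ because the unit ball being discretized is that of $M_N(X)$, of real dimension $\sim mN^2$. Second, any $n$-dimensional operator space embeds at level $N$ with distortion $(1-\delta)^{-1}$ into an operator space of dimension $m\le (1+2/\delta)^{cnN^2}$, the ambient algebra being a direct sum indexed by a $\delta$-net in the unit ball of $M_N(E^*)$ (of dimension $\sim nN^2$). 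Combining these two estimates gives $N(\mathcal{OS}_n,d_N,r)\le \exp\exp(c_r nN^2)$.

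For the lower bound, I would transcribe the proof of Theorem \ref{t1} (lower bound), substituting quantum expanders for random signs. Fix $0<\theta<1$ small. By \cite{P9}, there is a set $T$ of $n$-tuples of $N\times N$ unitaries with $|T|=K\ge \exp(c\theta^2 nN^2)$ such that for any two distinct tuples $s=(u_j^s),\,t=(u_j^t)\in T$,
\[\Bigl\|\sum\nolimits_j u_j^s\otimes \overline{u_j^t}\Bigr\|\le \theta n;\]
this is the quantum-expander analogue of the conclusion of Lemma \ref{l11}. Let $\mathcal{A}$ be the family of subsets of $T$ of cardinality $K/2$, so $|\mathcal{A}|\ge \exp(K/2)$ and any $y\neq z\in \mathcal{A}$ have $y\setminus z,\,z\setminus y$ nonempty. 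For each $x\in\mathcal{A}$ I define an $n$-dimensional operator space
\[E_x=\mathrm{span}[f_j^x:1\le j\le n]\subset \ell_\infty^n\oplus_\infty \bigoplus_{t\in x} M_N,\qquad f_j^x:=e_j\oplus \bigoplus_{t\in x}\overline{u_j^t}.\]
For $y\neq z\in\mathcal{A}$ and any $t\in y\setminus z$, the element $a:=\sum_j u_j^t\otimes e_j$ has $M_N(E_y)$-norm equal to $n$ (the summand of $E_y$ indexed by $t\in y$ contributes $\|\sum_j u_j^t\otimes \overline{u_j^t}\|\ge n$, via the maximally entangled vector) but $M_N(E_z)$-norm at most $\theta n$ (every $s\in z$ satisfies $s\neq t$, so the quantum expander bound applies in every $M_N$-summand). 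Hence $\|\mathrm{id}_N:M_N(E_z)\to M_N(E_y)\|\ge 1/\theta$, the exact matricial analogue of the identity-map lower bound used in the Banach proof.

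The volume-counting step is then identical to the Banach case: for a fixed $x\in\mathcal{A}$, the set $\{y\in\mathcal{A}:d_N(E_x,E_y)<r\}$ is controlled by an $\eta/(2n)$-net in the $n^2$-dimensional space of linear maps $\mathbb{C}^n\to \mathbb{C}^n$, of cardinality at most $(1+4n/\eta)^{n^2}$. Note that $N$ does not enter this step, since linear maps between two $n$-dimensional spaces form an $n^2$-parameter family regardless of operator-space structure; the $d_N$-metric merely selects which of them are close. Consequently a maximal $r$-separated subset $\mathcal{X}\subset \mathcal{A}$ has size
\[|\mathcal{X}|\ge \frac{|\mathcal{A}|}{(1+4n/\eta)^{n^2}}\ge \exp(K/4)\ge \exp\exp(c' nN^2),\]
the last inequality holding because $K=\exp(c\theta^2 nN^2)$ eventually dominates $n^2\log(c/\eta)$. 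The principal obstacle is the separation argument for $E_y$ and $E_z$: the construction of $E_x$ must be tuned so that its $M_N$-level norms faithfully record the quantum-expander structure of the unitaries. A naive substitution of random unitaries for $\pm 1$-signs without the \cite{P9} spectral estimate would fail to provide the required matricial separation $\|\sum_j u_j^s\otimes \overline{u_j^t}\|\le \theta n$, and this is precisely the point that Section \ref{s3} will emphasize.
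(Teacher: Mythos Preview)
Your proposal is correct and follows essentially the same route as the paper: the upper bound via embedding into $\ell_\infty^k\otimes M_N$ with $k\le(1+2/\delta)^{nN^2}$ followed by a net argument, and the lower bound by building spaces $E_x$ indexed by half-subsets of the quantum-expander family $T$ from \cite{P9}, testing with $\sum_j u_j^t\otimes e_j$ to get $\|{\rm id}_N\|\ge 1/\theta$, and then the same $B(\ell_1^n,E_x)$ net-counting (with the correct observation that $N$ does not enter that step). Two cosmetic points: the precise dependence $|T|\ge\exp(c\theta^2 nN^2)$ is not what Theorem~\ref{goal} actually asserts (only $|T|\ge\exp(\beta_\delta nN^2)$ for some $\beta_\delta>0$), and in the complex case the net bound is $(1+4n/\eta)^{2n^2}$; neither affects the argument.
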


As observed in \cite[Lemma 2.11]{P9}, for any $E\in {\cl OS}_n$ and $\vp>0$
there is $F\subset \ell^k_\infty \otimes M_N$ such that $d_N(E,F)\le (1-\d)^{-1}$
provided $k\le (1+2/\d)^{nN^2}$. The argument for this is entirely analogous to the one above
for Lemma \ref{l22}. Using this, the upper bound in Theorem \ref{t2} can be proved just like
 the one for Theorem \ref{t1}. For the lower bound, as in \cite{P9} we crucially use
 quantum expanders  following on \cite{Ha} (see \cite{P9} 
 for more information and  references). 
 
 Let $\tau_N$ denote the normalized trace on $M_N$.
We will denote by $S_\vp=S_\vp(n,N)\subset U(N)^n$ the set of all   $n$-tuples
$u=(u_j)\in U(N)^n$ such that 
  $\forall x\in M_N$,   we have
$$\|\sum u_j (x -\tau_N(x)I) u_j^* \|_{L_2(\tau_N)}\le \vp n  \| x\|_{L_2(\tau_N)}.$$

We will use the following  result from \cite{P9}.

\begin{thm}\label{goal}  
For any $0<\d<1$ there is a 
  constant $ \beta_\d>0$   such that for each $0<\vp<1$   and
for all sufficiently large integer $n$ (i.e. $n\ge n_0$   with $n_0$ depending
on   $\vp$ and $\d$) and for all  $N\ge 1$,  there is a  subset $T\subset S_\vp(n,N)$ 
with $|T|\ge \exp{\beta_\d nN^2}$  that is $\delta$-separated in the following sense:
$$\forall s=(s_j) \not= t=(t_j) \in T\quad \|\sum s_j \otimes \ovl{ t_j}\|\le (1-\delta) n,  $$
where the norm is in the space $M_N(M_N)$ or equivalently $M_{N^2}$. 
\end{thm}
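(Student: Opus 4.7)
The plan is to imitate the probabilistic maximality argument of Lemma \ref{l11}, replacing the uniform counting measure on $\{-1,1\}^n$ by the normalised Haar measure $\mu$ on $U(N)^n$, with the coupling
$$\Phi(s,t):=\Bigl\|\sum\nolimits_{j=1}^n s_j\otimes\ovl{t_j}\Bigr\|_{M_{N^2}}$$
playing the role of $|\sum s_j t_j|$ in the commutative setting. Two ingredients will be needed: (i) that $S_\vp(n,N)$ has $\mu$-measure bounded away from $0$ uniformly in $N$ once $n\ge n_0(\vp)$; and (ii) a large-deviations bound
$$\mu\bigl\{t\in U(N)^n\ :\ \Phi(s,t)>(1-\delta)n\bigr\}\le \exp(-\eta_\delta\, nN^2)$$
for some $\eta_\delta>0$, holding uniformly in the frozen parameter $s\in U(N)^n$ as soon as $n\ge n_0(\delta)$.

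For (i) I would quote the quantum-expander existence result proved in \cite{P9} (in the wake of Hastings): for each fixed $\vp\in(0,1)$ one has $\mu(S_\vp(n,N))\ge 1/2$ provided $n\ge n_0(\vp)$, uniformly in $N\ge 1$. For (ii) the heuristic is that the random $N^2\times N^2$ matrix $\sum_j s_j\otimes\ovl{t_j}$ is a sum of $n$ independent unitaries (twisted by the fixed $s_j$), whose operator norm in the free-probability limit concentrates near $2\sqrt{n-1}$, which is $\ll n$ for $n$ large. Concretely, I would combine a non-commutative moment computation \`a la Haagerup--Thorbj\o rnsen, yielding $\E\,\Phi(s,t)\le C\sqrt n$ uniformly in $s$ and in $N$, with a Wigner-type deviation estimate for the operator norm at the dimensional scale $N^2$ (the "Coulomb-gas" scale characteristic of spectral deviations of $N\times N$ unitary ensembles). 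This is precisely the analysis carried out in \cite{P9}.

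Granting (i) and (ii), the maximality argument is purely formal. Let $T\subset S_\vp(n,N)$ be a maximal subset such that $\Phi(s,t)\le(1-\delta)n$ for all distinct $s,t\in T$; by maximality every $u\in S_\vp(n,N)$ satisfies $\Phi(u,t)>(1-\delta)n$ for some $t\in T$, so
$$\tfrac12\le \mu(S_\vp(n,N))\le \sum_{t\in T}\mu\{u:\Phi(u,t)>(1-\delta)n\}\le |T|\exp(-\eta_\delta\, nN^2),$$
whence $|T|\ge (1/2)\exp(\eta_\delta\, nN^2)\ge \exp(\beta_\delta\, nN^2)$ for $n\ge n_0(\vp,\delta)$ with, say, $\beta_\delta:=\eta_\delta/2$. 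The main obstacle is unambiguously step (ii): the deviation bound must hold at the dimensional scale $nN^2$, and not merely at the scale $nN$ that would come from a direct L\'evy/Gromov--Milman concentration on $U(N)^n$ (whose Ricci curvature is only of order $N$). Producing this $N^2$-scale large deviation for the operator norm of a degree-one polynomial in independent Haar unitaries is what makes the matricial case genuinely more delicate than Lemma \ref{l11}, and it is the technical heart of \cite{P9}.
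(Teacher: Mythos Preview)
Your overall scheme---maximality in $S_\vp(n,N)$ with respect to Haar measure $\mu$ on $U(N)^n$, plus a union bound---is exactly the paper's approach (see \S\ref{s3}). Step (i) is correct and correctly attributed.

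The gap is in step (ii). You claim the deviation bound
\[
\mu\{t:\Phi(s,t)>(1-\delta)n\}\le \exp(-\eta_\delta nN^2)
\]
uniformly over \emph{all} frozen $s\in U(N)^n$. This is false, and the paper's \S\ref{s3} gives the counterexample explicitly: take $s_j=I$ for all $j$, so that $\Phi(s,t)=\|\sum_j t_j\|$. Then $\{t:\|\sum t_j\|>(1-\delta)n\}$ contains $\bigcap_j\{t:\Re\langle t_je,e\rangle>1-\delta\}$ for any fixed unit vector $e$, and one checks that this set has $\mu$-measure at least $\exp(-c_1 nN)$. So your heuristic (``sum of $n$ independent unitaries twisted by $s_j$, concentration at the Coulomb-gas scale $N^2$'') is simply wrong for arbitrary $s$: the best uniform bound is $\exp(-cnN)$, which is what Haagerup--Thorbj{\o}rnsen plus standard concentration yields, and that is not enough.

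What actually happens is that the $nN^2$-scale bound holds only when the frozen tuple $s$ is itself a quantum expander. For $s\in S_\vp(n,N)$ (and $t\in S_\vp$ as well) one has the implication, from \cite[Lemma 1.11]{P9},
\[
\Phi(s,t)>(1-\delta)n \ \Longrightarrow\ \sup_{U,V\in U(N)} \Re\Bigl(N^{-1}\mathrm{tr}\sum_j s_jU t_j^*V^*\Bigr)>\vp' n,
\]
and it is the right-hand event whose probability is $\le\exp(-c_\delta nN^2)$, via a net over $(U,V)\in U(N)^2$ and the elementary subGaussian estimate $\mu\{\Re(N^{-1}\mathrm{tr}\sum_j t_j)>s\}\le\exp(-c''s^2N^2/n)$. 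So quantum expanders are needed \emph{twice}: once so that $S_\vp$ has measure $\ge 1/2$ (your step (i)), and once more so that the frozen point in the deviation inequality belongs to $S_\vp$, which is what makes the $N^2$-scale bound true at all. Your write-up treats (i) and (ii) as independent ingredients; in fact (ii) only becomes available \emph{because} you are inside $S_\vp$. Since the maximality argument only needs to cover $S_\vp$ and only uses centres $t\in T\subset S_\vp$, this restricted version of (ii) suffices---but you should state it that way and drop the false uniformity claim.
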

\begin{proof}[Proof of Theorem \ref{t2} (Lower bound)] 
Let $T$ be as in the preceding Theorem. Let $K =|T|\ge \exp{\beta_\d nN^2}$ (assumed even) and let
$\cl A$  denote again the class of subsets $x\subset T$ such that $|x|=K/2$, so that
again for all $x\not= y\in \cl A$ we have $x\not\subset y$, and 
$|\cl A|\ge \gamma 2^K/\sqrt{K}$. 
We denote by $e_{jj}$ the canonical basis of $\ell_\infty^n$.
We then define
for any subset $x\in \cl A$, (note $x\subset T \subset U(N)^n$)
$$u_j^x= e_{jj}  \oplus [ \oplus_{t\in x} t_j ] ,$$
and we define 
$$F_x={\rm span}[u_j^x,1\le j\le n]\subset  \ell_\infty^n\oplus_{\infty} [\ell_\infty(x)\otimes M_N]  .$$
Here the space $\ell_\infty(x)\otimes M_N$ is equipped with its (unique) $C^*$-norm,
namely the norm defined for a function $f:\ x\to M_N$ by $\|f\|=\sup\nolimits_{t\in x} \|f(t)\|_{M_N}$.
With this notation, $ [ \oplus_{t\in x} t_j ]\in \oplus_{t\in x}  M_N$ is identified with the function
$f_j:\ x\to M_N$ defined by $f_j(t)=t_j\ \forall t\in x$.\\
We equip $M_N\otimes M_N$ (resp. $M_N \otimes [\ell_\infty(x)\otimes M_N] $)
with the classical minimal (or spatial) tensor norm, or equivalently 
the norm of  $M_{N^2}$ (resp.  the sup-norm  on $x$ of     $M_{N^2}$-valued functions). 
Let ${\theta}=1-\delta$. Then, whenever $x\not= y\in \cl A$,  since $x\not\subset y$  there is $s\in x\setminus y$
and hence (assuming as we may $(1-\delta)n\ge 1$)
$$\|\sum \bar s_j \otimes u_j^x\|=n\quad \|\sum \bar s_j \otimes u_j^y\|\le (1-\delta)n=\theta n.$$
We will now use
   the metric entropy estimate
of the unit ball of $B(\ell_1^n, E_x)$, 
just like before. 
Fix $r$ such that  $1<r<{\theta} ^{-1}$ and let $\eta$ be as before so that $1+\eta = 1/(r{\theta} )$.  We find that
for a fixed $x\in \cl A$     we have 
 $$|\{y\in \cl A\mid d_{N}(F_x,F_y)<r\}|\le (1+4n/\eta)^{2n^2}.$$
   Let $\cl X\subset \cl A$ be a maximal subset such that
 $d_{N}(F_x,F_y)\ge r$ for any $x\not= y\in \cl X$.
 Then $\cl A$ is covered by the $d_N$-balls of radius $r$ centered in the points of $\cl X$, and hence by the claim
 $$ |\cl A|\le |\cl X| (1+4n/\eta)^{2n^2}\le |\cl X|  \exp{8n^3/\eta } .$$
 Thus we conclude as before
 $$|\cl X| \ge  |\cl A| \exp{-8n^3/\eta},$$
 and for $n$ large enough, the announced lower bound follows, just like for Theorem \ref{t1}.
 \end{proof}

 \begin{rem}
 Using  the stronger separation property considered in \cite[Lemma 2.14]{P9}, it is easy to check 
 the same lower bound
 (but possibly for a smaller value of  
   $b_r$) as in Theorem \ref{t2} for the class 
   denoted by ${\cl H\cl H}_n$ of $n$-dimensional Hilbertian homogeneous
 operator spaces.  However, this argument requires    $N^2/n\ge N_0$. Note that 
 if $E,F$ are two $n$-dimensional Hilbertian homogeneous
 operator spaces, then $d_{cb}(E,F)=\|Id:E\to F\||       \|Id:F\to E\|$. This is due
 to C. Zhang (see \cite[p. 217]{P4}). With this fact  the completion of the proof is much simpler
for this case.
\end{rem} 
 \begin{rem} At the time of this writing, we do not see how to prove the analogue  for $OS_n$ of Szarek's result
described in Remark \ref{or}. However we do have a very simple argument
to show that 
for any $r>1$ $\forall    n\ge (r\log r)/4 \ \forall N\ge 1$ we have $$ \log\log N({\cl H\cl H}_n,d_N,r)\le (c/r) nN^2$$ 
for some absolute constant $c>0$. The idea is that any $E\in {\cl H\cl H}_{2n}$ is completely $2$-isomorphic
to $F\oplus F$ where $F$ is any $n$ dimensional subspace of $E$ (note that, by homogeneity,  these are all mutually completely isometric).
Thus if we let $ m(n,N, r)$ be the smallest $k$ such that for any $E\in {\cl H\cl H}_{n}$  
there is $\hat E\subset \ell_\infty^k(M_N)$ with $d_N(E,\hat E)\le r$, then we have \begin{equation}\label{eq88}m(2n,N,2r)\le 2m(n,N,r).\end{equation} But 
as we already mentioned we also know that 
$m(n,N,r)\le \exp{ (2nN^2r(r-1)^{-1})}$  (see \cite[Lemma 2.11]{P9}), and in particular, say, $m(n,N,2)\le \exp{ (4nN^2)}$.
Iterating \eqref{eq88}  (assuming for simplicity that $n$ is a power of $2$) we obtain
$$m(n,N,2^{k+1})\le 2^k m(n/2^k,N,2)\le 2^k\exp{ (4nN^2 2^{-k} )}.$$
Thus if $r=2^k\le \exp{ (4n /r )}$ (i.e. if $n\ge (r\log r)/4$) then $m(n,N,2r)\le \exp{ (8nN^2 /r )}$ for any $N\ge 1$.
From this the announced result follows, using the operator space version of Lemma \ref{l22}.
 \end{rem}
\section{Why quantum expanders ?}\label{s3}
  It seems worthwhile to clarify why quantum expanders are needed in the proof of
   Theorem \ref{goal} and why the latter is   crucial to extend \S 1.
   
  A direct generalization of Lemma \ref{l11} to matrices would require to produce      a  subset $T\subset U(N)^n$ 
with $|T|\ge \exp{\beta nN^2}$  such that  
\begin{equation}\label{eq44}\forall u=(u_j) \not= v=(v_j) \in T\quad \|\sum u_j \otimes \ovl{ v_j}\|\le (1-\d) n, \end{equation}
with   $\beta,\delta$ as in Theorem \ref{goal}.  One could argue
similarly and consider a maximal subset $T\subset U(N)^n$  satisfying \eqref{eq44}.
Then denoting by $m$ the normalized  Haar measure on $U(N)^n$,
we note that, by maximality,  $U(N)^n$ is covered by the
union of the sets $$C_u(\d)=\{v\in U(N)^n\mid \|\sum u_j \otimes \ovl{ v_j}\|> (1-\delta) n\}$$
for $u\in T$. Thus if we set
$$F(\d)=\sup_{u\in T} m(C_u(\d))$$
we find
$$1\le |T| F(\d).$$
In \S 1 we use the easy fact that when $N=1$ there is $c>0$ 
so that we have $m(C_u(\d))\le \exp -cn$
for all $u\in \T^n$ or actually for all $u\in \{-1,1\}^n$. 
For arbitrary $N$, one can show by concentration of measure arguments an upper bound
of the form $m(C_u(\d))\le \exp -cnN$. This follows e.g. from a Gaussian matrix result
 due to Haagerup and Thorbjoernsen \cite[Th. 3.3]{HT2} together with 
 a comparison principle between Gaussian and unitary random matrices
(see \cite[p. 82]{MP}). 

However, in our present setting
the  corresponding result we would need to prove Theorem \ref{t2} by
  direct analogy with \S 1 should be \begin{equation}\sup_{u \in U(N)^n}\label{eq33}m(C_u(\d))\le \exp -cnN^2.\end{equation}
This  is simply not true: Take $u_j=I$ for all $j$, and note
$\| \sum u_j \otimes \ovl{ v_j} \|= \|\sum v_j\|\ge \Re(\sum \langle v_j e,e\rangle) $
for any fixed $e$ in the unit sphere of $ \ell_2^N$.
Then  $\cap_{1\le j\le n}\{ v \mid  \Re( \langle v_j e,e\rangle) > (1-\d)   \} \subset C_u(\d)$,
and hence (here   $m_1$ denotes Haar measure on $U(N)$)
$$m(C_u(\d) )\ge (m_1\{ v\in U(N) \mid \Re( \langle v e,e\rangle) > (1-\d) \})^n.$$
But it is well known and easy to check that for any fixed $\d>0$, there is $c_1>0$
such that $m_1\{ v\in U(N) \mid \Re( \langle v e,e\rangle) > (1-\d) \}\ge \exp -c_1 N$
for all $N$ large enough. Thus we must  have 
$m(C_u(\d))\ge \exp -c_1 nN$ when $N\to \infty$, contradicting  
the desired bound $m(C_u(\d))\le \exp -cnN^2$ at least if we want it to hold for any $u$.

However, let us assume instead that
  \begin{equation}\label{eq32}m\{u\in U(N)^n\mid m(C_u(\d))\le \exp -cnN^2\} \ge a>0,\end{equation}
and let $A=\{u\in U(N)^n\mid m(C_u(\d))\le \exp -cnN^2\}$.
Then let $T$ be  a maximal subset $T\subset A$ satisfying \eqref{eq44}.
Again by maximality we find $A\subset \cup_{u\in T} C_u(\d)$
and hence, since $m(C_u(\d))\le \exp -cnN^2$ for $u\in A$, we now have
$a\le m(A)\le |T|  \exp -cnN^2$ and we conclude
$$|T|\ge a  \exp cnN^2,$$
which implies Theorem \ref{goal}.

In \cite{P9} we show that this can be applied when $A$ is the set $S_\vp(n,N)$
for $0<\vp<1$ small enough with respect to $\d$. 
For the convenience of the reader we will outline the argument
of \cite{P9}, with special emphasis
on the key concentration of measure issues.\\
Assume $0<\d<1$. We define $0<\vp,\vp'<1$ by the equalities
$$3\vp'^{1/3}= (1-\d)/2\quad 2\vp= (1-\d)/2,$$
so that $$1-\d= 3\vp'^{1/3}+2\vp.$$
With these values (depending on $\d$) of $\vp,\vp'$, it is shown in \cite[Lemma 1.11]{P9} that if $u,v\in S_\vp(n,N)$ then
$$\inf_{U,V\in U(N)} N^{-1} \sum {\rm tr}  | Uu_jV-v_j|^2\ge 2n(1-\vp')\Rightarrow \|\sum u_j\otimes \bar v_j\|\le n(1-\d).$$
Equivalently  we have
 \begin{equation}\label{eq55}  C_u(\d)\subset \{ v\in U(N)^n \mid \sup_{U,V\in U(N)} \Re(N^{-1}  {\rm tr} ( \sum_1^n u_j Uv_j^*V^*))>\vp'n \}.\end{equation}
Let $f(U,V)=\Re(N^{-1}  {\rm tr} ( \sum_1^n u_j Uv_j^*V^*))$. Using the fact that
for any $U,U',V,V'$ in $U(N)$ we have
$$ |f(U,V)-f(U',V')|\le n (\|U-U'\|+\|V-V'\|),$$
together with the well known fact (see e.g. \cite[appendix]{Sz2}) that for any $\xi>0$  there is
an $\xi$-net  ${\cl N}_{\xi}\subset U(N)$ with respect to the operator norm
with  $|{\cl N}_{\xi}|\le (\gamma/{\xi})^{2  N^2}$ for some numerical constant $\gamma$,
we find that
 $$\sup_{U,V\in U(N)} f(U,V)\le \sup_{U,V\in {\cl N}_{\xi}}f(U,V)+2n\xi,$$
 and hence by \eqref{eq55} 
 $$m(C_u(\d)) \le \sum_{U,V\in {\cl N}_{\xi}} m\{ v\mid   \Re(N^{-1}  {\rm tr} ( \sum_1^n u_j Uv_j^*V^*))>\vp' n-2n\xi \} . $$
 We choose $\xi=\vp'/4$ so that $\vp'n-2n\xi = n\vp'/2$.
 By the translation invariance of $m$, $m\{ v\mid   \Re(N^{-1}  {\rm tr} ( \sum_1^n u_j Uv_j^*V^*))>\vp' n-2n\xi \}$
 is actually independent of $U,V$ and $u$ and equal
 simply to
 $m\{ v\mid   \Re(N^{-1}  {\rm tr} ( \sum_1^n v_j ))>\vp' n-2n\xi \}$.
 Thus we find
 $$m(C_u(\d)) \le |  {\cl N}_{\xi}|^2  m\{ v\mid   \Re(N^{-1}  {\rm tr} ( \sum_1^n v_j ))>n\vp'/2\} . $$
 But now the last measure can be efficiently estimated by a simple subGaussian argument
 since it is known (see e.g. \cite[\S 36.3]{HR}) that there is a numerical constant $c''>0$ such that 
 for any  $s>0$
     $$m\{\Re(N^{-1}  {\rm tr} ( \sum_1^n v_j ))> s \}\le \exp -\frac{c'' s^2N^2}{n}.$$
     Recalling the bound for $|{\cl N}_{\xi}|$ for $\xi=\vp'/4$  we obtain  
     $$m(C_u(\d)) \le (4\gamma/ \vp' )^{4N^2}  \exp{-\{c''(\vp'/2)^2N^2 n} \} .
     $$ 
     Note that for  any $0<\delta<1 $, when $n$ is large enough (more precisely $n\ge n(\d)$), there is
      $c=c_\d>0$ so that the last term is
     $\le \exp -c_{\d} nN^2$ for all $N\ge1$. Indeed,  recalling that $\vp'=\vp'_\d$, one can set e.g. $c_\d= (1/2)c''(\vp'/2)^2$
     and then determine $n(\d)$ by the requirement that $ (4\gamma/ \vp' )^{4}\le \exp c_{\d} n$
     for all $n\ge n(\d)$.
     Then  for all  $n\ge n(\d)$     we have for any $N\ge 1$  (recall here that $\vp=\vp_\d$)
     \begin{equation}\label{eq31} S_\vp(n,N)\subset \{u\in U(N)^n\mid m(C_u(\d)) \le \exp -c_{\d} nN^2\}.\end{equation}
 To complete this approach, we now need to check that for any $\vp>0$
 $m(S_\vp(n,N))$ is bounded below. This is proved in the appendix of \cite{P9},
 as an immediate consequence of the following inequality where $C$ is an absolute constant
 and $P$ is the orthogonal projection onto multiples of the $N\times N$-identity matrix
 $$\forall n,N\ge 1 \quad \int_{U(N)^n}  \|\sum_1^n  u_j \otimes \bar u_j (1-P) \| dm(u) \le C \sqrt{n}.$$
 Indeed, by Tchebyshev's inequality, this implies
 $$\forall n,N\ge 1 \quad m((S_\vp(n,N))\ge  1- (C/\vp) {n}^{-1/2}.$$
 Thus we obtain finally by \eqref{eq31} for any $0<\d<1$
 $$m\{u\in U(N)^n\mid m(C_u(\d))\le \exp -c_{\d}nN^2\} \ge 1- (C/\vp_\d) {n}^{-1/2},$$
 and if $n(\d)$ is chosen large enough this last term is $>1/2$ for all $n\ge n(\d)$.\\
Thus the answer to the question raised in the title of this section  is : Because  quantum expanders 
are the key to prove 
\eqref{eq32}  which can be substituted to the (false) \eqref{eq33} !

  \n\textbf{Acknowledgment.}  I am grateful to Oded Regev  for useful suggestions
  and references, to Mikael de la Salle for several  improvements and simplifications and to Olivier Gu\'edon
  for pointing out \cite{LRT}. Special thanks are due to S. Szarek for numerous  corrections, suggestions and
  improvements.


\begin{thebibliography}{100}
      \bibitem{Bar} A. Barvinok, Thrifty approximations of
convex bodies by polytopes, preprint (arxiv June 2012).

 \bibitem{BV} A. Barvinok, Alexander, E. Veomett,  The computational complexity of convex bodies. 
 Surveys on discrete and computational geometry, 117-137, Contemp. Math., 453, Amer. Math. Soc., Providence, RI, 2008.
 
     \bibitem{BLM}
   J. Bourgain, J. Lindenstraus and V. Milman, Aproximation of zonoids by zonotopes, Acta Math. 162 (1989) 73-141.
   
        \bibitem{Br} E.M. Bronstein,
     $\vp$-entropy of affine-equivalent convex bodies and Minkowski's compactum. (Russian)
Optimizatsiya No. 22 (39) (1978), 5-11, 155.
     
     \bibitem{Br2} E.M. Bronstein, 
Approximation of convex sets by polytopes, Journal of Mathematical Sciences, 153 (2008) 727-762.
      
         \bibitem{CS} J. H. Conway   and N. J. A. Sloane,  Sphere packings, lattices and groups. Third edition. Springer-Verlag, New York, 1999.
         
      \bibitem{ER} E.G. Effros and Z.J. Ruan, \emph{Operator Spaces},  The Clarendon Press, Oxford University Press, New York, 2000, xvi+363 pp.
      
             \bibitem{E} J. Elton,   Sign-embeddings of $\ell_1^n$, Trans Amer. Math. Soc. 279 (1983),
113-124.

 \bibitem{G} E. D.  Gluskin, The diameter of the Minkowski compactum is roughly equal to n.
Functional Anal. Appl., 15 (1981), 72-73.
 
 \bibitem{G2} E. D.  Gluskin, Probability in the Geometry of Banach spaces.  
 Proceedings of the ICM, Vol.  2 (Berkeley, Calif., 1986), 924-938 (Russian).
 Translated in: Amer. Math. Soc. Transl. 147 (1990) 35-49.

  \bibitem{HT2} 
U. Haagerup and S. Thorbj{\o}rnsen, Random matrices and $K$-theory for exact $C^*$-algebras, \emph{Doc. Math.} {\bf 4} (1999), 341--450 (electronic).

 \bibitem{Ha} M. Hastings, 
 Random unitaries give quantum expanders, Phys. Rev. A (3) 76 (2007), no. 3, 032-315, 11 pp.
   \bibitem{HR}  E. Hewitt and  K. A.  Ross, \emph{Abstract harmonic analysis. Vol. II: Structure and analysis for compact groups. Analysis on locally compact Abelian groups. } Springer-Verlag, New York-Berlin 1970.
            \bibitem{JP} 
M. Junge and G. Pisier,  Bilinear forms on exact operator spaces and $B(H)\otimes B(H)$,  {Geom. Funct. Anal.} {\bf 5} (1995), no. 2, 329--363.  
         
           \bibitem{LRT} A. Litvak, M. Rudelson and N. Tomczak-Jaegermann, On approximations by projections of polytopes with few facets. Israel J. Math. to appear.
 
           
          \bibitem{MP} M.B. Marcus and G.Pisier,   {\it Random Fourier series with Applications to Harmonic Analysis.}  Annals of Math. Studies n$^\circ$101, Princeton University Press. (1981).
          
              \bibitem{QS} V. Milman, Almost Euclidean quotient spaces of subspaces of a finite-dimensional normed space. Proc. Amer. Math. Soc. 94 (1985),  445-449. 
              
  \bibitem{Pa} A. Pajor,
   \emph{ Sous-espaces $\ell_1^n$ des Espaces de Banach}, Travaux 
   en cours. Herman. Paris, 1986.
  
\bibitem{Pm}    G. Pisier, Remarques sur un r\'esultat non publi\'e de B. Maurey. S\'eminaire d'Analyse
Fonctionnelle, 1980-1981, Exp. No. V, \'Ecole Polytechnique, Palaiseau 1981, 13 pp.
(available online at 
 \url{http://www.numdam.org/numdam-bin/feuilleter?id=SAF_1980-1981___})
  
\bibitem{P1}  
G. Pisier, {\it The volume of Convex Bodies and Banach
Space Geometry },  Cambridge University Press. 1989.

       \bibitem{P4}  
G. Pisier,  \emph{Introduction to operator space theory},  Cambridge University Press, Cambridge, 2003. 

 
 \bibitem{P9}  
G. Pisier, Quantum Expanders and Geometry of Operator Spaces, to appear in J. E. M. S.

 \bibitem{S} G. Schechtman,
 Embedding $X_p^m$ spaces into $\ell_r^n$,   Lecture Notes in Math., 1267, p. 53-74, Springer, Berlin, 1987. 
 \bibitem{S2} G. Schechtman,
 More on embedding subspaces of $L_p$ in $\ell_r^n$, Compositio Math. 61 (1987), no. 2, 159-169. 
  \bibitem{Sz} S. Szarek, On the existence and uniqueness of complex structure and spaces with few operators.
Trans. Amer. Math. Soc., 293 (1986), 339-353.
  
   \bibitem{Sz2}  S. Szarek, The finite-dimensional basis problem with an appendix on nets of Grassmann
manifolds. Acta Math. 151   (1983), 153-179.
 \bibitem{Sz3}  S. Szarek, 
On the geometry of the Banach-Mazur compactum, Lecture Notes in Math. 1470 (1991) 48-59.

  \bibitem{Sz4}  S. Szarek,   Convexity, complexity, and high dimensions, Proceedings
of the International Congress of Mathematicians, Madrid, August 22-
30, 2006, Vol. II, European Mathematical Society, Zurich, Switzerland,
2006, pp. 1599-1622.
  
   \bibitem{Sz5}  S. Szarek, Isomorphic embeddings into $\ell_\infty^N$ and rough nets of the
Banach-Mazur compactum, private communication, Sept. 2013.
   
   \bibitem{STJ1} S. Szarek and N. Tomczak-Jaegermann, Saturating constructions for normed spaces. GAFA, Geom. funct. anal.
 14 (2004) 1352-1375.
 
   \bibitem{STJ2} S. Szarek and N. Tomczak-Jaegermann, Saturating constructions for normed spaces. II. J. Funct. Anal. 221 (2005), no. 2, 407-438. 
 \bibitem{T} M. Talagrand, Embedding subspaces of $L_1$ into $\ell^n_1$,
Proc. Amer. Math. Soc. 108 (1990), no. 2, 363-369. 

\bibitem{TJ}  N. Tomczak-Jaegermann, {\it Banach-Mazur Distances and Finite-dimensional Operator Ideals,}
Longman, Wiley New York, 1989.

 \end{thebibliography}
 \end{document}